\newcommand{\defin}[1]{\emph{\textcolor{ForestGreen}{#1}}}
\def\thm@space@setup{
  \thm@preskip=4mm
  \thm@postskip=0mm
}
\theoremstyle{plain} 
\newtheorem{theorem}{Theorem}
\newtheorem{problem}[theorem]{Problem}
\newtheorem{conjecture}[theorem]{Conjecture}
\newtheorem{lemma}[theorem]{Lemma}
\newtheorem{obs}[theorem]{Observation}
\newtheorem*{theorem*}{Theorem}
\newtheorem*{corollary*}{Corollary} 
\newtheorem*{problem*}{Problem}
\newtheorem*{conjecture*}{Conjecture}
\newtheorem*{lemma*}{Lemma}
\newtheorem*{obs*}{Observation}
\newtheorem*{remark*}{Remark}
\newtheorem*{proposition*}{Proposition}
\newtheorem*{question*}{Question} 
\newtheorem*{example*}{Example}
\theoremstyle{remark} 
\newtheorem{claim}[theorem]{Claim} 
\newtheorem*{claim*}{Claim} 
\newtheorem*{definition*}{Definition} 
\newenvironment{proofclaim}[1][]
	{\par\noindent {\it Proof of the Claim}. }{ \hfill$\lozenge$\par\vspace{11pt}}
\DeclarePairedDelimiter\set{\{}{\}}
\DeclarePairedDelimiter\floor{\lfloor}{\rfloor}
\DeclarePairedDelimiter{\abs}{\lvert}{\rvert}
\newcommand{\Oh}{\mathcal{O}}
\newcommand{\calA}{\mathcal{A}} 
\newcommand{\calB}{\mathcal{B}} 
\newcommand{\calC}{\mathcal{C}}
\newcommand{\calO}{\mathcal{O}}
\newcommand{\calP}{\mathcal{P}} 
\newcommand{\calR}{\mathcal{R}}
\newcommand{\calT}{\mathcal{T}}
\newcommand{\calW}{\mathcal{W}}
\DeclareMathOperator\tw{tw}
\DeclareMathOperator\pw{pw}
\DeclareMathOperator\td{td}
\DeclareMathOperator\tdtwo{td_2}
\DeclareMathOperator\diam{diam}
\DeclareMathOperator\wcol{wcol}
\DeclareMathOperator\btree{F}
\renewcommand{\leq}{\leqslant}
\renewcommand{\geq}{\geqslant}
\let\subset\subseteq
\let\epsilon\varepsilon
\let\setminus\setminus
\renewenvironment{enumerate}{\begin{enumorig}[label=\textup{(\roman*)}, noitemsep, 
topsep=2pt plus 2pt, labelindent=.2em, leftmargin=*, widest=iii]}{\end{enumorig}}
\newenvironment{enumerateAlpha}{\begin{enumorig}[label=\textup{(\alph*)}, noitemsep, 
topsep=2pt plus 2pt, labelindent=.2em, leftmargin=*, widest=iii]}{\end{enumorig}}
\newenvironment{enumerateAlphaPrim}{\begin{enumorig}[label=\textup{(\alph*')}, noitemsep, 
topsep=2pt plus 2pt, labelindent=.2em, 
leftmargin=*, widest=iii]}{\end{enumorig}}
\begin{document} 
\title[Treedepth and 2-treedepth in graphs with no long induced paths] 
{ Treedepth and 2-treedepth in graphs with \linebreak no long induced paths}

\author[J.~Hodor]{J\k{e}drzej Hodor}
\author[F.~Illingworth]{Freddie Illingworth}
\author[T.~Mazur]{Tomasz Mazur}

\thanks{(J.~Hodor) \textsc{Theoretical Computer Science Department, 
Faculty of Mathematics and Computer Science and  Doctoral School of Exact and Natural Sciences, Jagiellonian University, Krak\'ow, Poland.} (F.~Illingworth) \textsc{Department of Mathematics, University College London, UK.} (T.~Mazur) \textsc{Theoretical Computer Science Department, 
Faculty of Mathematics and Computer Science, Jagiellonian University, Krak\'ow, Poland.}}

\thanks{\textit{E-mails:} \href{mailto:jedrzej.hodor@gmail.com}{jedrzej.hodor@gmail.com}, \href{mailto:f.illingworth@ucl.ac.uk}{f.illingworth@ucl.ac.uk}, \href{mailto:tom.mazur@student.uj.edu.pl}{tom.mazur@student.uj.edu.pl}}

\thanks{J.\ Hodor was supported by a Polish Ministry of Education and Science grant (Perły Nauki; PN/01/0265/2022).
F.\ Illingworth was supported by EPSRC grant EP/V521917/1 and the Heilbronn Institute for Mathematical Research.
T.\ Mazur was supported by the National Science Center of Poland under grant UMO-2023/05/Y/ST6/00079 within the Weave-UNISONO program.}

\thanks{\noindent\rule{\textwidth}{0.5pt}}

\begin{abstract}
    Huynh, Joret, Micek, Seweryn, and Wollan (Combinatorica, 2022) introduced a graph parameter, later referred to as 2-treedepth and denoted $\td_2(\cdot)$.
    The parameter is the natural 2-connected version of treedepth. 
    For every graph, 2-treedepth is at most the treedepth but can be much smaller: long paths have arbitrarily large treedepth but $2$-treedepth equal to $2$. 
    We prove a converse showing that every graph with no induced path on $t$ vertices and 2-treedepth at most $k$ has treedepth at most $g(k, t)$. In fact, we determine the value of the function $g$ up to a multiplicative factor of 2.
    

    Additionally, we give asymptotically tight bounds for the problem of forcing long induced paths in graphs with long paths and bounded $2$-treedepth or bounded pathwidth.
    The latter result answers a question of Hilaire and Raymond (E-JC, 2023).
\end{abstract}

\maketitle

\thispagestyle{empty}

\section{Introduction}\label{sec:introduction}

\defin{Treedepth} is one of the key graph parameters in structural graph theory~\cite{sparsity}.
Recently, Huynh, Joret, Micek, Seweryn, and Wollan introduced its variant called \defin{$2$-treedepth}~\cite{HJMSW22}.
Both parameters admit recursive definitions.
We denote the treedepth (resp.\ $2$-treedepth) of a graph $G$ by \defin{$\td(G)$} (resp.\ \defin{$\tdtwo(G)$}), and we set
\begin{align*}
    \td(G)&=\begin{cases}
    0 & \textrm{if $G$ is the null graph,}\\
    \min_{v \in V(G)} \td(G-v) + 1 &\textrm{if $G$ is connected\footnotemark, and}\\
    \max_{i \in [k]}\td(C_i)&\textrm{if $G$ consists of components $C_1,\dots,C_k$ and $k > 1$;}
    \end{cases}\\
    \tdtwo(G)&=\begin{cases}
    0 & \textrm{if $G$ is the null graph,}\\
    \min_{v \in V(G)} \tdtwo(G-v) + 1 &\textrm{if $G$ is a block\footnotemark, and}\\
    \max_{i \in [k]}\tdtwo(B_i)&\textrm{if $G$ consists of blocks $B_1,\dots,B_k$ and $k > 1$.}
    \end{cases}
\end{align*}
\addtocounter{footnote}{-1}
\footnotetext{In this paper, connected graphs are nonnull, that is, they have at least one vertex. Note that a tree is defined as a connected forest, thus, trees and subtrees are also assumed to be nonnull.}
\addtocounter{footnote}{+1}
\footnotetext{A \defin{cut-vertex} of a graph $G$ is a vertex $v\in V(G)$ such that $G-v$ has more components than $G$.
A \defin{block} of $G$ is a maximal connected subgraph of $G$ without a cut-vertex.
Note that the blocks can be of three types: maximal $2$-connected subgraphs, cut edges together with
their endpoints, and isolated vertices.
Two blocks have at most one vertex in common, and such a vertex is always a cut-vertex.}

The new parameter gained some attention~\cite{block-elimination-distance,games,k-td} and was found to be useful in a rather unexpected context~\cite{wcols,centered}.
One of the critical properties of $2$-treedepth is describing classes of graphs excluding a fixed ladder as a minor (for a positive integer $t$, the ladder $L_t$ is the $2\times t$ grid).
More precisely, a class of graphs closed under taking minors excludes some ladder if and only if there exists a constant bounding the $2$-treedepth of all the graphs in the class~\cite{HJMSW22}.
This is analogous to the folklore statement that excluding a path is equivalent to bounding the treedepth.

By their definitions, every graph $G$ satisfies $\tdtwo(G) \leq \td(G)$.
Furthermore, paths have unbounded treedepth, while the $2$-treedepth of any path is at most $2$.
A natural question arises: for which classes of graphs does there exist a function\footnote{All functions that we consider in this paper map positive integers to positive integers.} $g$ such that every graph $G$ in the class satisfies $\td(G) \leq g(\tdtwo(G))$?
The answer is clear for classes of graphs closed under taking minors or under taking subgraphs.
As long as such a class excludes a path, both parameters are bounded by an absolute constant, so $g$ exists trivially.
When such a class contains all paths, $g$ cannot exist.

To complete the picture, we consider \defin{hereditary} classes of graphs (that is, classes of graphs closed under taking induced subgraphs).
Again, if such a class contains all paths, then $g$ does not exist.
Now define \defin{$g(k, t)$} to be the greatest treedepth amongst graphs that are $P_t$-free\footnote{For a graph $H$, we say that a graph $G$ is \defin{$H$-free} when $G$ does not contain $H$ as an induced subgraph.
For a positive integer $t$, we denote the path on $t$ vertices (in other words, the \defin{path of order $t$}) by \defin{$P_t$}.} and have 2-treedepth at most $k$. It is possible to use known results to show that $g(k, t)$ is always finite. 
For example, statements on long paths in sparse graphs forcing long induced paths~\cite{long-induced-paths-sparse} show that $g(k, t)$ is finite while bounds on weak colouring numbers in classes of graphs of bounded treewidth~\cite{Grohe15} show that $g(k, t) \leq \binom{t + k}{t - 1} = \calO_t(k^{t - 1})$.
We discuss these in more detail in \cref{sec:known}.

We significantly improve the bound on $g$ (showing that $g(k, t) = \calO_t(k^{\floor{(t - 1)/2}}$) and, in fact, determine $g(k, t)$ up to a factor of two.

\begin{theorem} \label{thm:main}
    For every integer $t \geq 2$, for every integer $k \geq 1$, and for every $P_t$-free graph $G$ with $\tdtwo(G) \leq k$,
    \[
    \td(G) < 2 \binom{\lfloor (t-1) \slash 2 \rfloor+k-1}{\lfloor (t-1) \slash 2 \rfloor}.
    \]
\end{theorem}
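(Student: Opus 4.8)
I would prove the bound by induction, essentially on the pair $(t,k)$ ordered lexicographically, arranging things so that it falls out of the Pascal identity
\[
\binom{d+k-1}{d}=\binom{d+k-2}{d}+\binom{d+k-2}{d-1},\qquad d:=\floor{(t-1)/2},
\]
whose summands, after doubling, are exactly the target bounds for $(t,k-1)$ and for $(t-2,k)$. First reduce to $G$ connected: $\td$ is the maximum over components, $\tdtwo$ does not grow on components, and $P_t$-freeness is hereditary. The base cases $t\le 2$ and $k\le 1$ are immediate since then $G$ is edgeless ($P_2$-freeness, resp.\ $\tdtwo(G)\le 1$, forbids an edge), so $\td(G)\le 1<2=2\binom{d}{d}$. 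I would also record the elementary fact that a $P_t$-free forest has treedepth at most $t-1$ — a forest's paths are all induced, so it has no $P_t$ subgraph, and any rooting of it is an elimination forest of height $\le t-1$; as $\tdtwo(G)\le 2$ characterises forests, this settles all instances with $k\le 2$ (here $t-1<2\binom{d+1}{d}$). So in the inductive step we may assume $t\ge 3$ and $k\ge 3$.

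\emph{Case $1$: $G$ is a block} (that is, $2$-connected or a single edge). By definition of $\tdtwo$ there is a vertex $v$ with $\tdtwo(G-v)\le k-1$; each component of $G-v$ is $P_t$-free with $2$-treedepth at most $k-1$, so by induction has treedepth less than $2\binom{d+k-2}{d}$. Hence $\td(G)\le 1+\td(G-v)<1+2\binom{d+k-2}{d}\le 2\binom{d+k-1}{d}$, the last inequality being $1\le 2\binom{d+k-2}{d-1}$, which holds as $d\ge 1$.

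\emph{Case $2$: $G$ has a cut vertex.} This is the heart of the proof and the only place the induced-path structure (rather than merely $\tw(G)\le k-1$) is exploited. Root the block--cut tree of $G$ at a cut vertex $c_0$; for a block $B$ let $c_B$ be its parent cut vertex and $G_B$ the subgraph induced by $B$ together with all its descendants, so $G_B$ meets the rest of $G$ exactly in $c_B$ and $G$ is the union, glued at $c_0$, of the graphs $G_B$ over blocks $B$ containing $c_0$. With $h(H,r):=1+\td(H-r)$ (the least height of an elimination tree of $H$ with root $r$), the gluing identity $h(H_1\cup_c H_2,\,c)=\max\paren{h(H_1,c),h(H_2,c)}$ for $H_1\cap H_2=\{c\}$ (stack the two rooted elimination trees under a common root $c$) gives $\td(G)\le h(G,c_0)=\max_{B\ni c_0}h(G_B,c_0)$, so it suffices to bound $h(G_B,c_B)$ for every block $B$. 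I would do this by a secondary induction on $|V(G_B)|$, proving a bound of the shape $h(G_B,c_B)\le 2\binom{m_B+k-1}{m_B}$ where $m_B$ is roughly half the number of vertices of the longest induced path in $G_B$. The two engines are: \emph{(a)} some vertex of $G$ outside $G_B$ is adjacent to $c_B$, so an induced path of $G_B$ reaching $c_B$ extends by at least one vertex — pushing $m_B$ down to at most $d-1$ for the top blocks, exactly the reduction from $\binom{d+k-1}{d}$ to $\binom{d+k-2}{d-1}$ that Case $2$ may use; and \emph{(b)} $h(G_B,c_B)=1+\td(G_B-c_B)$, and $G_B-c_B$ is the connected graph $B-c_B$ with the subtrees $G_{B'}$ (over the child blocks $B'$ of $B$) hung off it meeting it and each other only in their attachment points, so $\td(G_B-c_B)\le\td(B-c_B)+\max_{B'}\td(G_{B'}-c_{B'})$; one bounds $\td(B-c_B)$ by deleting the $\tdtwo$-optimal vertex of $B$ and applying the outer induction (on $k$) to the $P_t$-free pieces of smaller $2$-treedepth, and bounds each $\td(G_{B'}-c_{B'})=h(G_{B'},c_{B'})-1$ by the secondary induction, using that an induced path of $G_{B'}$ from $c_{B'}$ prolongs through $B$. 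Feeding these in and matching terms against the Pascal identity yields the bound, and $m_B\le d-1$ at the top closes Case $2$.

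The step I expect to be the real obstacle is making Case $2$ exact: one must carry the induced-path budget, the $2$-treedepth, and the factor $2$ in lockstep, so that ``delete the $\tdtwo$-optimal vertex'' (one unit of $k$) and ``descend past a cut vertex'' (at least two units of path budget) assemble with no slack into $\binom{d+k-1}{d}=\binom{d+k-2}{d}+\binom{d+k-2}{d-1}$; in particular identifying the correct ``half the longest induced path'' quantity $m_B$, getting the floors and the strict inequality right, and — crucially — not descending the block--cut tree one edge-block at a time (a chain of edge-blocks is a long induced path, and each descent would squander a $+1$), instead absorbing the forest-like parts wholesale via the recorded fact that a $P_t$-free forest has treedepth at most $t-1$ (in fact $\lceil(t-1)/2\rceil$), which sits well below $2\binom{d+k-1}{d}$. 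A routine prerequisite is the subgraph-monotonicity of $\tdtwo$ (it is minor-monotone), so that the pieces left after the deletion genuinely have $2$-treedepth at most $k-1$.
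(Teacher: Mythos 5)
Your outer skeleton is sound and matches the paper's: reduce to connected $G$, handle blocks by deleting the $\tdtwo$-optimal vertex (one unit of $k$), and aim to make the cut-vertex case consume one unit of the path budget so that the two branches assemble via Pascal's identity — this is exactly the recursion $f(k,t)=f(k,t-2)+f(k-1,t)+1$ that the paper proves equals $2\binom{\floor{(t-1)/2}+k-1}{\floor{(t-1)/2}}-1$ for even $t$. But the cut-vertex case, which you yourself flag as ``the real obstacle,'' is where the theorem actually lives, and the one concrete mechanism you offer for it does not suffice. Your engine (a) extends an induced path of $G_B$ through the parent cut vertex $c_B$ by \emph{one} vertex; a one-sided gain of one vertex per level of descent reduces $t$ by $1$ per level and can only yield an exponent of order $t$, i.e.\ the weak bound $\calO_t(k^{t-1})$ already obtainable from weak colouring numbers. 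To earn the exponent $\floor{(t-1)/2}$ the path budget must drop by \emph{two} per level, which requires extending an induced path at \emph{both} ends simultaneously; nothing in your plan produces the second extension, and your quantity $m_B$ (``roughly half the number of vertices of the longest induced path in $G_B$'') is never pinned down in a way that would make the halving come out.

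The paper's resolution is a genuinely different bookkeeping device that you are missing: it strengthens the statement to a relative parameter $\td(G,S)$ and only forbids induced \emph{$S$-paths} $P_t$ (paths with both endpoints in a target set $S$), see \cref{lem:td-f}. Passing to a \emph{minimal} $S$-core $G'$ (a minimal component-wise connected union of blocks covering $S$; \cref{lem:S-core} shows this loses nothing) guarantees that every leaf block of $G'$ contains a vertex of $S$ besides its cut vertex; hence an induced path between two cut-vertices of $G'$ can be prolonged into $S$ at both ends, so the set $S'=\calA(G')$ of all cut-vertices admits no induced $S'$-path $P_{t-2}$. Induction at $t-2$ then eliminates all of $S'$ at cost $f(k,t-2)$ in one shot, a composition lemma (\cref{lem:apply-elimination}) splices this partial elimination onto the remainder, and since what is left of each block after removing the cut-vertices is confined to a single block, one further vertex per component (the $\tdtwo$-optimal vertex of its block) drops the $2$-treedepth to $k-1$. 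This simultaneous treatment of all cut-vertices also disposes of the chain-of-edge-blocks problem you worry about, without any special-casing of forest-like parts. As written, your proposal identifies the target recurrence but does not contain the idea that closes it, so it is not a proof.
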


A slight modification (in fact, simplification) of a construction of Grohe, Kreutzer, Rabinovich, Siebertz, and Stavropoulos~\cite{Grohe15} shows that \cref{thm:main} is within a factor of two of best possible.
More precisely, we have the following.

\begin{theorem}\label{thm:lower-bound}
    For all integers $t \geq 2$ and $k \geq 1$, there exists a $P_t$-free graph $G$ with $\tdtwo(G) \leq k$ such that
        \[\td(G) \geq \binom{\lfloor (t-1) \slash 2 \rfloor+k-1}{\lfloor (t-1) \slash 2 \rfloor}.\]
\end{theorem}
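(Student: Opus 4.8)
Write $\ell := \floor{(t-1)/2}$, so that $2\ell + 1 \le t$; since any induced $P_t$ contains an induced $P_{2\ell+1}$, it suffices to construct a $P_{2\ell+1}$-free graph $G$ with $\tdtwo(G) \le k$ and $\td(G) \ge \binom{\ell+k-1}{\ell}$. For $t = 2$ (so $\ell = 0$) the graph $G = K_1$ works, so assume $\ell \ge 1$. The construction is recursive and uses a single operation: for graphs $A$ and $B$, let $A \triangleright B$ be obtained from a copy of $A$ by attaching to each vertex $a \in V(A)$ a private copy $B_a$ of $B$, joined completely to $a$ (the $B_a$ pairwise nonadjacent, and each $B_a$ nonadjacent to $V(A) \setminus \set{a}$). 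Assuming $A$ and $B$ are connected, I would prove:
\begin{enumerate}
  \item $\td(A \triangleright B) \ge \td(A) + \td(B)$;
  \item $\tdtwo(A \triangleright B) \le \max\set{\tdtwo(A),\ \tdtwo(B) + 1}$;
  \item every induced path of $A \triangleright B$ has at most $\max\set{p + 2,\ q}$ vertices, where $p$ and $q$ are the largest numbers of vertices on an induced path of $A$, resp.\ of $B$.
\end{enumerate}

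Facts (ii) and (iii) rest on the same structural observation: since $a$ is joined to all of $B_a$, the blocks of $A \triangleright B$ are exactly the blocks of $A$ together with the subgraphs induced on $\set{a} \cup V(B_a)$, and deleting $a$ from such a block leaves $B_a$; this yields (ii) at once. For (iii), an induced path of $A \triangleright B$ either lies inside a single $\set{a} \cup V(B_a)$ (where it has at most $\max\set{q, 3}$ vertices, a second vertex of $B_a$ after $a$ being a chord through $a$), or it meets $V(A)$ in an induced subpath of $A$ to which at most one vertex of some $B_a$ is appended at each end. Fact (i) is the heart of the argument. Take an elimination forest $F$ of $A \triangleright B$ of minimum height. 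Its restriction to $V(A)$ is an elimination forest of $A$, hence contains a root-to-leaf chain $a_1, \dots, a_m$ in $F$ with $m \ge \td(A)$; let $a := a_m$ and let $U$ be the set of ancestors of $a$ in $F$ that lie in $B_a$. The remaining vertices of $B_a$ are descendants of $a$, and the restriction of $F$ to them is an elimination forest of $B_a - U$, which has height at least $\td(B) - \abs{U}$; on the other hand the path in $F$ from the root to $a$ passes through the $m$ vertices $a_1, \dots, a_m$ and the $\abs{U}$ vertices of $U$, all distinct. Splicing these together produces a root-to-leaf path of $F$ on at least $m + \td(B) \ge \td(A) + \td(B)$ vertices, so $\td(A \triangleright B) \ge \td(A) + \td(B)$. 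This splicing step, and checking it is never off by one, is the main thing to get right.

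Finally, set $G_{1,k} := K_k$, $G_{\ell,1} := K_1$, and $G_{\ell,k} := G_{\ell-1,k} \triangleright G_{\ell,k-1}$ for $\ell, k \ge 2$. A straightforward induction on $(\ell,k)$ using (i)--(iii) and Pascal's identity $\binom{\ell+k-2}{\ell-1} + \binom{\ell+k-2}{\ell} = \binom{\ell+k-1}{\ell}$ shows that $G_{\ell,k}$ is connected, $\tdtwo(G_{\ell,k}) \le k$, $\td(G_{\ell,k}) \ge \binom{\ell+k-1}{\ell}$, and every induced path of $G_{\ell,k}$ has at most $2\ell$ vertices. The base cases are immediate, using $\td(K_k) = \tdtwo(K_k) = k$ and the fact that $K_k$ has no induced $P_3$; note that for $\ell = 1$ one must invoke the latter directly rather than quote (iii), which would only give $P_4$-freeness. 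Taking $G := G_{\ell,k}$ then gives a $P_{2\ell+1}$-free, hence $P_t$-free, graph with the required properties.
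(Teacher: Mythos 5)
Your construction is exactly the paper's: the operation $A \triangleright B$ is the paper's ``type $a(H,H')$'' composition, the recursion $G_{\ell,k} = G_{\ell-1,k} \triangleright G_{\ell,k-1}$ with bases $K_k$ and $K_1$ is identical, and your three claims match the paper's three items (2-treedepth at most $k$, $P_{2\ell+1}$-freeness, and $\td \geq \binom{\ell+k-1}{\ell}$ via $\td(A\triangleright B)\geq\td(A)+\td(B)$ plus Pascal's identity). The only difference is that you prove the key additivity inequality by splicing chains in a minimum-height elimination forest, whereas the paper inducts directly on the recursive definition of treedepth; both arguments are correct, so this is essentially the same proof.
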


$P_2$-free graphs have no edges and $P_3$-free graphs are disjoint unions of cliques and so $g(k, 3) = k$.
We also determine the function $g$ exactly for $t \in \set{4, 5}$ (note that, for $t = 5$, the lower bound from \cref{thm:lower-bound} is $\binom{k + 1}{2}$).

\vbox{
\begin{theorem} \label{thm:p5free-upper}
    Let $G$ be a graph and let $\tdtwo(G) = k$.
    \begin{enumerate}
        \item If $G$ is $P_4$-free, then $\td(G) = k$, \label{item:p4}
        \item If $G$ is $P_5$-free, then $\td(G) \leq \binom{k+1}{2}$. \label{item:p5}
    \end{enumerate}
\end{theorem}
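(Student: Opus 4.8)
Both parts rest on the trivial inequality $\tdtwo(G)\le\td(G)$, so in each case it suffices to bound $\td(G)$ from above. I would prove (i) by induction on $|V(G)|$ (the null graph being trivial) and (ii) by induction on $k$.

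\emph{Part (i).} The key preliminary observation is that adjoining a universal vertex raises the treedepth by exactly one: for any nonnull graph $C$ and any new vertex $u$ one has $\td(u\ast C)=1+\td(C)$. Indeed, an elimination forest of $u\ast C$ consists of a set $A\subseteq V(C)$ of proper ancestors of $u$ (forming a root path), then $u$, then an elimination forest of $C-A$ below $u$, so $\td(u\ast C)=\min_{A\subseteq V(C)}\bigl(|A|+1+\td(C-A)\bigr)$; since deleting $|A|$ vertices drops treedepth by at most $|A|$, this minimum is attained at $A=\emptyset$. Now run the induction. If $G$ is disconnected then both $\td$ and $\tdtwo$ equal the maximum over the components (for $\tdtwo$, because the blocks of $G$ are precisely those of its components), and the inductive hypothesis applies componentwise. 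If $G$ is connected with no cut vertex (so $G$ is a block) then both parameters are given by $1+\min_{v}(\text{parameter of }G-v)$, and picking $v$ optimal for $\tdtwo(G)$ gives $\td(G)\le1+\td(G-v)\le1+\tdtwo(G-v)=\tdtwo(G)$ by induction. Otherwise $G$ is connected and has a cut vertex; since $G$ is a cograph it then has a universal vertex $u$ with $G-u$ disconnected, say $G-u=C_1\sqcup\dots\sqcup C_p$ with $p\ge2$, and the blocks of $G$ are exactly $u\ast C_1,\dots,u\ast C_p$ (each is $2$-connected or a $K_2$, and is maximal because adjoining a vertex of another $C_j$ turns $u$ into a cut vertex). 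Hence, using the preliminary observation twice and the inductive hypothesis on the smaller graphs $u\ast C_i$,
\[
\td(G)=1+\td(G-u)=1+\max_i\td(C_i)=\max_i\bigl(1+\td(C_i)\bigr)=\max_i\td(u\ast C_i)=\max_i\tdtwo(u\ast C_i)=\tdtwo(G).
\]

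\emph{Part (ii).} The base cases $k\le2$ are direct: $\tdtwo(G)\le1$ forces $G$ edgeless, and $\tdtwo(G)\le2$ forces $G$ to be a forest, which when $P_5$-free is a disjoint union of stars and double stars and so has treedepth at most $3=\binom32$. For $k\ge3$ it is enough, passing to components, to treat connected $G$, and the target is to produce a set $S$ with $|S|\le k$ such that every component of $G-S$ has $\tdtwo\le k-1$: then $\td(G)\le|S|+\binom k2=\binom{k+1}2$ by the inductive hypothesis applied to each (necessarily $P_5$-free) component. If $G$ has no cut vertex, then $S=\{v\}$ with $v$ optimal for $\tdtwo(G)$ works (this case in fact gives $\td(G)\le1+\binom k2$). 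When $G$ has a cut vertex, I would use two structural facts about connected $P_5$-free graphs: first, $\omega(G)\le\tdtwo(G)$, which follows by an easy induction since a $2$-connected graph containing $K_q$ has $\tdtwo\ge q$; and second, that the block-cut tree has radius at most $2$ (a path through four blocks would produce an induced $P_5$), so there is a block $B_0$ such that every other block meets $B_0$ in a single cut vertex and, together with the blocks further out from that vertex, forms a ``windmill-like'' appendage --- a connected $P_5$-free graph all of whose blocks contain that attaching vertex. Two appendage-carrying cut vertices of $B_0$ must be adjacent in $B_0$ (otherwise an induced $B_0$-path between them, prolonged by one vertex into each appendage, is an induced $P_5$), so they form a clique, whence $\omega(G)\le k$ bounds their number by $k$; moreover $P_5$-freeness forces the attaching vertex of each appendage to be essentially universal in the nontrivial blocks of that appendage, so a single deletion there lowers those blocks' $\tdtwo$. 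One then takes $S$ to consist of these at most $k$ cut vertices, supplemented when necessary by a $\tdtwo$-optimal vertex of $B_0$. The degenerate sub-case in which all blocks of $G$ pass through one cut vertex $c_0$ is instead handled directly, via $\td(G)\le1+\max_B\td(B-c_0)\le2+\binom k2\le\binom{k+1}2$, using the no-cut-vertex bound $\td(B)\le1+\binom k2$ on each block $B$.

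The main obstacle is the cut-vertex case of part (ii): one must simultaneously decrease the $\tdtwo$ of the central block $B_0$ and of every appendage while deleting at most $k$ vertices in total, even though the number of blocks attaining $\tdtwo=k$ can be far larger than $k$. The resolution is exactly the refined $P_5$-free structure above --- the block-cut tree is shallow, each appendage is a windmill with a near-universal attaching vertex (so one deletion neutralises all of its heavy blocks at once), and the appendage-carrying cut vertices of $B_0$ form a clique of size at most $\omega(G)\le k$ --- and checking that this bookkeeping really closes, in particular that deleting this clique together with at most one further vertex of $B_0$ brings $B_0$'s contribution down to $k-1$, is where the genuine work lies; everything else reduces to the universal-vertex lemma and the no-cut-vertex case.
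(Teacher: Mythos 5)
Your overall strategy coincides with the paper's: part (i) via the universal-vertex (apex) lemma and the block structure of connected cographs, and part (ii) via the shallow block tree of $P_5$-free graphs, the fact that the cut vertices attached to the central block form a clique, and the fact that each outer block's attaching vertex is an apex in that block. Part (i), the base cases, the no-cut-vertex case, and the single-cut-vertex case of part (ii) all check out.

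The gap is exactly the point you defer in the central case. Your deletion set $S$ consists of the clique $\calA(G)$ of appendage-carrying cut vertices together with, ``when necessary'', one further $\tdtwo$-optimal vertex $v$ of the central block $C$, and your bound $|\calA(G)| \leq \omega(G) \leq k$ only yields $|S| \leq k+1$. Since the final count is $\td(G) \leq |S| + \binom{k}{2}$, the extra vertex gives $\binom{k+1}{2}+1$, missing the target by one; and $v$ cannot simply be dropped, because deleting $\calA(G)$ alone need not lower $\tdtwo(C)$. You also locate the ``genuine work'' in showing that deleting $S$ brings $C$'s contribution down to $k-1$, but that part is immediate: every component of $C-S$ is a subgraph of $C-v$, and $\tdtwo(C-v)=\tdtwo(C)-1\leq k-1$. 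The real work is the cardinality of $S$, and the fix is to apply your own observation $\omega(\cdot)\leq\tdtwo(\cdot)$ to $C-v$ rather than to $G$: if $v\in\calA(G)$ then $S=\calA(G)$ has at most $\omega(C)\leq\tdtwo(C)\leq k$ elements, while if $v\notin\calA(G)$ then $\calA(G)$ is a clique inside $C-v$, so $|\calA(G)|\leq\tdtwo(C-v)\leq k-1$ and again $|S|\leq k$. This is precisely how the paper closes the count (via $|\calA(G)|=\tdtwo(G[\calA(G)])\leq\tdtwo(C-v)$); with that one line added, your argument goes through.
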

}

\subsection{Forcing long induced paths}\label{sec:forcing}
Suppose that a graph $G$ contains a path of order $n$ as a subgraph.
What can we say about the longest induced path in $G$?
In general, there can be no non-trivial induced path, e.g.\ when $G$ is a complete graph.
In 1982, Galvin, Rival, and Sands~\cite{Galvin82} proved that for a fixed positive integer $t$, there exists a function $f$ (that tends to infinity with its input) such that, for every positive integer $n$ and for every graph $G$ that excludes $K_{t,t}$ as a subgraph, if $G$ contains $P_n$ as a subgraph, then $G$ contains $P_{f(n)}$ as an induced subgraph.
One can study the growth of such a function assuming that $G$ is in a more restrictive graph class; see a summary of the known results~\cite[Figure~1]{long-induced-paths-sparse} as well as the recent work of Hunter, Milojević, Sudakov, and Tomon~\cite{longinducedpaths-nokss}.
For a graph class $\calC$, let \defin{$f(\calC,\cdot)$} be a function taking the largest possible values such that for every positive integer $n$ and for every $G \in \calC$, if $G$ contains $P_n$ as a subgraph, then $G$ contains $P_{f(\calC,n)}$ as an induced subgraph.

Hilaire and Raymond~\cite{Hilaire2023} proved that for every positive integer $k$, if $\calP_k$ is the class of graphs with pathwidth\footnote{We denote the pathwidth of a graph $G$ as \defin{$\pw(G)$} and we give the formal definition in \cref{sec:forcing:proofs}.} at most $k$, then\footnote{The statement of \cite[Theorem~1.5]{Hilaire2023} is $f(\calP_{k - 1}, n) \geq 1/3 \cdot n^{1/k}$. However, a careful reading of the argument shows that the same proof gives the improved bound $f(\calP_k, n) \geq 1/3 \cdot n^{1/k}$. \label{foot:improved}} $f(\calP_{k},n)$ is in $\Oh(n^{2 \slash (k + 1)})$ and $\Omega(n^{1 \slash k})$.
We show that the latter bound is correct and so $f(\calP_{k},n) = \Theta(n^{1 \slash k})$. 

\vbox{
\begin{theorem}\label{thm:pw}
    For every positive integer $k$, there exists a constant $c_k > 0$ such that for every positive integer $n$, there exists a graph $G$ with $\pw(G) \leq k$ that contains $P_n$ as a subgraph but the longest induced path in $G$ is of order at most $c_k \cdot n^{1 \slash k}$.
\end{theorem}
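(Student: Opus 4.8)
The plan is to construct, for each fixed $k$, a recursively defined family of graphs $H_k(m)$ (one for each integer $m \ge 1$) with pathwidth at most $k$, each containing a Hamilton path, and each with no induced path on more than $\calO_k(m)$ vertices. The construction will satisfy $\abs{V(H_k(m))} \ge m^k$, so taking $m = \ceil{n^{1 \slash k}}$ produces a graph of pathwidth at most $k$ that contains $P_n$ as a subgraph and whose longest induced path has order $\calO_k(n^{1 \slash k})$ --- exactly the assertion of the theorem, with $c_k$ the implied constant.

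Put $H_1(m) = P_m$. For $k \ge 2$, take $m$ pairwise disjoint copies $D_1, \dots, D_m$ of $H_{k-1}(m)$, fix in each a Hamilton path and call its endpoints $a_i$ and $b_i$ (we maintain as an invariant that $H_j(m)$ carries a distinguished Hamilton path, one of whose endpoints we write $a$), and add new vertices $w_1, \dots, w_{m-1}$, where $w_i$ is joined to every vertex of $D_i$ and to the single vertex $a_{i+1}$ of $D_{i+1}$. Concatenating the Hamilton paths of the $D_i$ through the edges $b_i w_i$ and $w_i a_{i+1}$ produces a Hamilton path of $H_k(m)$ with endpoint $a_1$, so the invariant is preserved and $\abs{V(H_k(m))} = m \cdot \abs{V(H_{k-1}(m))} + (m - 1) \ge m^k$ by induction.

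For the pathwidth bound I would prove, by induction on $k$, the slightly stronger statement that $H_k(m)$ has a path decomposition of width at most $k$ whose first bag contains the endpoint $a$. Given such decompositions of $D_1, \dots, D_m$, concatenate them, add $w_i$ to every bag of the block of $D_i$, and insert the two-element bag $\set{w_i, a_{i+1}}$ between the block of $D_i$ and the block of $D_{i+1}$. Every edge is covered --- edges inside a $D_i$; the edges from $w_i$ into $D_i$ (since $w_i$ now lies in every bag of $D_i$'s block); and $w_i a_{i+1}$ (by the inserted bag) --- and the interval property holds because $a_{i+1}$ lies in the first bag of $D_{i+1}$'s block and therefore occurs in consecutive bags. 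The first bag still contains $a_1$, and every bag has size at most $k + 1$ (at most $k$ from a bag of some $D_i$'s decomposition, plus $w_i$), so $\pw(H_k(m)) \le k$.

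Finally, to bound induced paths, let $Q$ be an induced path in $H_k(m)$ and define $C_1 = 1$ and $C_k = 2 + 2 C_{k-1}$; I claim $\abs{V(Q)} \le C_k m$. The components of $H_k(m) - \set{w_1, \dots, w_{m-1}}$ are exactly $D_1, \dots, D_m$, so deleting from $Q$ the at most $m - 1$ hubs it uses splits it into at most $m$ subpaths, each lying in a single $D_i$ and hence an induced path of $H_{k-1}(m)$. The key point is that a subpath $R \subseteq D_i$ that avoids both ends of $Q$ has only one vertex: its two neighbours along $Q$ lie outside $D_i$, hence lie in $\set{w_{i-1}, w_i}$ (the only vertices outside $D_i$ with a neighbour in $D_i$), hence are exactly $w_{i-1}$ and $w_i$; but $w_i$ dominates $D_i$, so were $R$ to have two or more vertices, $w_i$ would be adjacent to the far endpoint of $R$, which is non-consecutive to $w_i$ along $Q$ --- a chord, contradicting that $Q$ is induced. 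Thus at most two of the subpaths (those containing an end of $Q$) can be long, each of order at most $C_{k-1} m$ by induction, while the remaining at most $m$ subpaths are single vertices; together with the at most $m - 1$ hubs this gives $\abs{V(Q)} \le 2 C_{k-1} m + m + (m - 1) \le C_k m$. The main obstacle is exactly the tension between this step and the pathwidth bound, and it is what dictates the asymmetric definition of $w_i$: if $w_i$ dominated both $D_i$ and $D_{i+1}$ the chord argument would be immediate but each bag would swell to size $k + 2$, whereas a bare edge between consecutive copies would keep pathwidth small but lose the domination needed to chop $Q$ into short pieces. Having $w_i$ dominate $D_i$ while meeting $D_{i+1}$ only in the vertex $a_{i+1}$ threads the needle.
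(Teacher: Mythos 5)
Your proof is correct, and your construction is genuinely different from the paper's. The paper builds a single family $G_{\ell,k}$ by chaining copies of \emph{increasing} sizes $G_{1,k},\dots,G_{\ell,k}$ and letting a distinguished root vertex of the $s$-th copy dominate the $(s+1)$-st; this one construction is then shown to be an interval graph of clique number $k+1$ (hence $\pw\leq k$), to have $2$-treedepth at most $k+1$, and --- crucially --- to be exactly $P_{\ell+1}$-free, which is what makes it serve simultaneously as the tight lower bound for \cref{thm:pw} and for \cref{thm:td2-lower}. You instead chain $m$ equal-sized copies of $H_{k-1}(m)$ through \emph{new} hub vertices $w_i$, each dominating its own copy and touching the next copy in a single vertex; your chord argument (an internal segment of an induced path trapped between $w_{i-1}$ and $w_i$ must be a single vertex, since $w_i$ dominates $D_i$) is the same mechanism the paper uses, but you only conclude that induced paths have order $\calO_k(m)$ rather than exactly at most $m$. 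That is entirely sufficient for \cref{thm:pw}, and your explicit path decomposition is a clean substitute for the paper's interval-representation argument (the two certify the same thing, given the paper's interval-graph definition of pathwidth). What your version gives up is the sharpness: the exact $P_{\ell+1}$-freeness and the $\binom{\ell+k-1}{k}$ vertex count are what let the paper match the upper bounds of \cref{thm:td2-upper} and of Hilaire--Raymond up to the constant $c_k$ only, and reuse the same graphs for the $2$-treedepth lower bound; your $H_k(m)$ would need a separate (easy) check of its $2$-treedepth to cover \cref{thm:td2-lower}. As stated, for \cref{thm:pw} alone, your argument is complete.
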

}

Furthermore, we give tight bounds for the class of graphs of bounded $2$-treedepth.
That is, for every integer $k \geq 2$, if $\calT_k$ is the class of graphs with $2$-treedepth at most $k$, then we show that $f(\calT_k,n) = \Theta(n^{1/(k - 1)})$.
In fact, for the lower bound, we use the same construction as in \cref{thm:pw}.

\begin{restatable}{theorem}{tdupper}\label{thm:td2-upper}
    For all integers $k \geq 2$ and $n \geq 1$, for every graph $G$ with $\tdtwo(G) \leq k$, if $G$ contains $P_n$ as a subgraph, then $G$ contains a path of order at least $n^{1/(k-1)}/2$ as an induced subgraph.
\end{restatable}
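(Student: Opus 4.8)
The plan is to induct on $k$, proving that if $\tdtwo(G)\le k$ and $G$ contains $P_n$ as a subgraph, then $G$ contains an \emph{induced} path on more than $n^{1/(k-1)}/2$ vertices. The base case $k=2$ should be quick: if $\tdtwo(G)\le 2$ then $G$ is a forest — a cycle would lie in a $2$-connected block $B$ on at least three vertices, and then $B-v$ is connected on at least two vertices for each $v\in V(B)$, so has an edge and $\tdtwo(B)\ge 1+\tdtwo(K_2)=3$ — and in a forest every path subgraph is already induced, so $G$ contains an induced $P_n$ and $n>n/2$. For the inductive step ($k\ge 3$) I would dispose of $n\le 2$ directly and otherwise, after replacing $G$ by the component containing the path, assume $G$ is connected.

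Fix a path $P=v_1\cdots v_n$ in $G$. The structural fact I want is that $P$ never returns to a block it has left — otherwise that block together with the returning subpath would be a $2$-connected subgraph properly containing it — so the blocks meeting $P$ form a path $B^{(1)},\dots,B^{(m)}$ in the block--cut tree of $G$, glued consecutively at cut vertices $a_2,\dots,a_m$ (with $a_1=v_1$ and $a_{m+1}=v_n$); let $P_j$ be the subpath of $P$ inside $B^{(j)}$, running from $a_j$ to $a_{j+1}$.

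The proof then branches on the size of $m$. If $m+1>n^{1/(k-1)}/2$, I build an induced path by hand: replace each $P_j$ by a shortest $a_j$--$a_{j+1}$ path $\hat P_j$ inside $B^{(j)}$ and concatenate them into $\hat P$. Any edge of $G$ inside a single piece $\hat P_j$ is already an edge of $\hat P$ (a shortest path is induced in its block), and any edge joining two distinct pieces $\hat P_i,\hat P_j$ forces, after examining the block--cut tree, the blocks $B^{(i)}$ and $B^{(j)}$ to be at tree-distance at most two, and then forces that edge to be the single edge $a_{i+1}a_{i+2}$ — in which case $\hat P_{i+1}$ is exactly that edge, so it is an edge of $\hat P$ after all. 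Hence $\hat P$ is induced, and since each $\hat P_j$ has at least two vertices, $\hat P$ has at least $m+1>n^{1/(k-1)}/2$ of them. If instead $m+1\le n^{1/(k-1)}/2$, I reduce to $2$-treedepth $k-1$: for each $2$-connected $B^{(j)}$ fix $u_j$ with $\tdtwo(B^{(j)}-u_j)\le k-1$, let $U$ be the set of these vertices, and set $H=\bigl(\bigcup_j B^{(j)}\bigr)-U$. Using that $2$-treedepth is the maximum over blocks and is monotone under vertex deletion (and $\tdtwo(K_2)=2\le k-1$ for bridge blocks), $\tdtwo(H)\le k-1$. Now $H$ contains $P-U$, which has at least $n-m$ vertices spread over at most $m+1$ subpaths, hence a subpath on at least $(n-m)/(m+1)$ vertices; feeding this into the induction hypothesis for $H$, and using the elementary inequality $(n-m)/(m+1)>n^{(k-2)/(k-1)}$ (which follows from $m+1\le n^{1/(k-1)}/2$ and $n^{1/(k-1)}\le n$), yields an induced path in $H\subseteq G$ on more than $n^{1/(k-1)}/2$ vertices.

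The step I expect to be most delicate is verifying that the concatenation $\hat P$ is induced: pushing the block--cut tree analysis through to conclude that the only surviving ``chord'' between pieces is the harmless edge $a_{i+1}a_{i+2}$ joining two consecutive cut vertices. A secondary point is that in the reduction step a shared cut vertex may be selected as $u_j$ for two consecutive blocks, so some $B^{(j)}$ loses more than one vertex — monotonicity of $2$-treedepth under vertex deletion handles this. Finally, the constant $1/2$ looks like it comes out of the two cases with essentially no slack, which is why the reduction must operate on the connected graph $\bigcup_j B^{(j)}$ rather than a disjoint union of its blocks.
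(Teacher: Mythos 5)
Your proof is correct and follows essentially the same route as the paper's: decompose the path along the blocks it traverses, then either concatenate induced/shortest paths between consecutive cut vertices (many blocks) or delete one vertex per block and induct on $k$ (few blocks). The paper streamlines precisely the two steps you flag as delicate by first passing to $G[V(P)]$, so that $P$ is Hamiltonian and the blocks are intervals of $P$ --- the inducedness of the concatenation is then immediate, and in the few-blocks case one recurses into a single large block $B_i - v$, half of whose Hamiltonian path survives the deletion.
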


\begin{theorem}\label{thm:td2-lower}
    For every integer $k\geq 2$, there exists a constant $c_k > 0$ such that for every positive integer $n$, there exists a graph $G$ with $\tdtwo(G) \leq k$ that contains $P_n$ as a subgraph but the longest induced path in $G$ is of order at most $c_k \cdot n^{1 \slash (k-1)}$.
\end{theorem}

\subsection{Outline of the paper}

In \cref{sec:known}, we study the weak variants of \cref{thm:main}.
In \cref{sec:preliminaries} we settle notation and we introduce basic tools.
In \cref{sec:proof}, we prove \cref{thm:main}.
In \cref{sec:short-paths}, we prove \cref{thm:p5free-upper}.
In \cref{sec:lower-bound}, we prove \cref{thm:lower-bound}.
In \cref{sec:forcing:proofs}, we prove \cref{thm:pw,thm:td2-lower,thm:td2-upper}.
In \cref{sec:open}, we state some open problems.

\section{Weak versions of \texorpdfstring{\cref{thm:main}}{main theorem}}\label{sec:known}

In this section, we show two ways of obtaining a weak version of \cref{thm:main}.

\subsection{First weak version of \texorpdfstring{\cref{thm:main}}{main theorem}}

Duron, Esperet, and Raymond~\cite[Corollary 1.4]{long-induced-paths-sparse} proved that if a minor-closed class of graphs $\calC$ excludes an outerplanar graph, then $f(\calC,\cdot)$ grows polynomially.
It is easy to check that for every positive integer $t$, the ladder $L_t$ satisfies $\tdtwo(L_t) \geq \log t$.
Therefore, for every positive integer $k$, the class of graphs with $2$-treedepth at most $k$ excludes $L_{2^{k+1}}$.
From~\cite[Theorem 4.8]{long-induced-paths-sparse} and the proof of~\cite[Corollary 1.4]{long-induced-paths-sparse} one can deduce the following statement.

\vbox{
\begin{lemma}[\cite{long-induced-paths-sparse}]\label{lem:DER}
    For every positive integer $\ell$ there exists a constant $c_\ell > 0$ such that for every positive integer $n$, for every graph $G$ with no $L_\ell$ as a minor, if $G$ contains $P_n$ as a subgraph, then $G$ contains a path of order at least $c_\ell \cdot n^{1 \slash \ell}$ as an induced subgraph.
\end{lemma}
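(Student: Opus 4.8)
The plan is to derive \cref{lem:DER} from the work of Duron, Esperet, and Raymond~\cite{long-induced-paths-sparse}, unwinding its proofs rather than quoting a single black box, since \cite[Corollary~1.4]{long-induced-paths-sparse} only asserts that a minor-closed class excluding an outerplanar graph has $f(\calC,\cdot)$ growing polynomially, without an explicit exponent. The first step is to note that the ladder $L_t$ is outerplanar: draw the $2t$-cycle formed by the two rails of $L_t$ and its two outermost rungs as the outer boundary of a disc; then the remaining $t-2$ rungs join, in this cyclic order, the vertex in position $j$ to the vertex in position $2t+1-j$, so they are pairwise nested and can be drawn inside the disc without crossings. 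Hence the class $\calC$ of $L_t$-minor-free graphs is a minor-closed class excluding an outerplanar graph, and~\cite[Theorem~4.8]{long-induced-paths-sparse} applies to it; one also records that the $K_{t,t}$-subgraph-freeness underlying the Galvin--Rival--Sands phenomenon holds here, because $L_t$ is bipartite with $t$ vertices on each side and hence a subgraph of $K_{t,t}$, so every $L_t$-minor-free graph is $K_{t,t}$-subgraph-free.

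To obtain the exponent, I would run the argument of~\cite[Corollary~1.4]{long-induced-paths-sparse} with the excluded graph $H = L_t$ kept explicit. The mechanism is the familiar recursion: if $G \in \calC$ contains a path $P$ of order $n$, then the chords of $P$ in $G$ cannot be arbitrarily tangled --- for instance, $t$ pairwise nested, well-separated chords of $P$ contract, together with the sub-arcs of $P$ between their endpoints, to an $L_t$-minor, and a similarly long family of pairwise crossing chords of $P$ likewise yields an $L_t$-minor, so no such configuration is present. Exploiting this, one passes from $P$ to a sub-arc whose order drops by a fixed power of itself while the chord structure strictly simplifies; iterating at most $t$ times produces a chord-free sub-arc, i.e.\ an induced subpath of $G$, and tracking the orders through the reductions gives an induced path of order $c_t \cdot n^{1/t}$. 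The role of $t$ is precisely that excluding $L_t$ forbids depth-$t$ tangles, so the recursion has depth $t$ and the exponent is its reciprocal.

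I expect the only genuine work to be the bookkeeping inside~\cite[Theorem~4.8]{long-induced-paths-sparse}: it is phrased for a general excluded outerplanar graph in terms of a structural parameter, and one must check that this parameter is $t$ up to a constant when the excluded graph is the ladder $L_t$, so that the power obtained is $n^{1/t}$ and not something weaker. This point is not delicate for us, however: in our use of \cref{lem:DER} --- to conclude that $g(k,t)$ is finite --- any polynomial lower bound of the form $c_t \cdot n^{\varepsilon(t)}$ with $\varepsilon(t) > 0$ would already suffice, and the exponent $1/t$ is in any case what the recursion described above yields for $L_t$.
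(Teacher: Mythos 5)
Your proposal matches the paper's treatment of this lemma essentially exactly: the paper offers no proof beyond the remark that the statement follows from \cite[Theorem~4.8]{long-induced-paths-sparse} together with the proof of \cite[Corollary~1.4]{long-induced-paths-sparse}, which is precisely the route you take, and your supporting verifications (that $L_t$ is outerplanar because its interior rungs form pairwise nested chords of the outer $2t$-cycle, and that $L_t$-minor-free graphs are $K_{t,t}$-subgraph-free since $L_t \subseteq K_{t,t}$) are correct. The additional sketch of the chord-based recursion inside the cited results is a plausible and correctly hedged account of the black box, but it goes beyond anything the paper itself spells out.
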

}

Let $t$ and $k$ be positive integers.
Let $G$ be a $P_t$-free graph with $\tdtwo(G) \leq k$.
As discussed, $G$ has no $L_{2^{k+1}}$ as a minor.
Let $c_{2^{k+1}} \leq 1$ be a constant from \cref{lem:DER} and let $n$ be the smallest positive integer such that $c_{2^{k+1}} \cdot n^{1 \slash 2^{k+1}} \geq t$.
In particular, such $n$ depends only on $k$ and $t$, however, $n = 2^{\Omega_t(2^k)}$.
\Cref{lem:DER} implies that $G$ has no $P_n$ as a subgraph.
Therefore, $\td(G) < n$.

\subsection{Second weak version of \texorpdfstring{\cref{thm:main}}{main theorem}}

Here, we will use several notions without defining them properly, as we will use them only as black boxes.
The first such notion is the notion of weak colouring number.
This is a family of graph parameters parametrized by positive integers and $\infty$.
For such $r$, we denote by $\wcol_r(G)$ the $r$th weak colouring number of a graph~$G$.

The idea that we exploit was already used by Bonamy,  Bousquet, Pilipczuk, Rzążewski, Thomassé, and Walczak~\cite[Lemma~29]{BONAMY2022353}.
The following lemma gathers two straightforward facts on weak colouring numbers. 
The first one follows directly from the definition and for the proof of the second, see e.g.\ lecture notes by Pilipczuk, Pilipczuk, and Siebertz~\cite[Chapter~1, Theorem~1.17]{notes}.

\begin{lemma}[Folklore]\label{wcol}
    Let $G$ be a graph and let $t \geq 3$ be an integer.
    \begin{enumerate}
        \item If $G$ is $P_t$-free, then $\wcol_{t-1}(G) = \wcol_\infty(G)$.\label{wcol:P_t-free}
        \item $\wcol_\infty(G) = \td(G)$.\label{wcol:wcol_infty}
    \end{enumerate}
\end{lemma}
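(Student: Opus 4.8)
The plan is to treat the two parts separately. For a linear order $\sigma$ of $V(G)$, a vertex $v$, and $r \in \N \cup \set{\infty}$, recall that the set $\mathrm{WReach}_r[\sigma, v]$ of vertices \emph{weakly $r$-reachable} from $v$ consists of those $u \in V(G)$ for which $G$ has a path from $v$ to $u$ with at most $r$ edges whose $\sigma$-minimum vertex is $u$ (in particular $v \in \mathrm{WReach}_r[\sigma,v]$), and $\wcol_r(G) = \min_\sigma \max_{v} \abs{\mathrm{WReach}_r[\sigma,v]}$.

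For the first part, the inclusion $\mathrm{WReach}_{t-1}[\sigma, v] \subseteq \mathrm{WReach}_\infty[\sigma,v]$ holds for every $\sigma$ and $v$, giving $\wcol_{t-1}(G) \le \wcol_\infty(G)$ with no hypothesis on $G$. For the reverse, fix $\sigma$ and $v$ and let $u \in \mathrm{WReach}_\infty[\sigma,v]$, witnessed by a path $P$ from $v$ to $u$ whose $\sigma$-minimum vertex is $u$; choose $P$ with as few vertices as possible. Then $P$ must be induced: a chord would let us replace $P$ by a strictly shorter path $P'$ from $v$ to $u$ with $V(P') \subseteq V(P)$, whose $\sigma$-minimum is again $u$ (as $u \in V(P') \subseteq V(P)$), contradicting the choice of $P$. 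Since $G$ is $P_t$-free, $P$ has at most $t - 1$ vertices, hence at most $t - 2 \le t-1$ edges, so $u \in \mathrm{WReach}_{t-1}[\sigma, v]$. Thus $\mathrm{WReach}_\infty[\sigma, v] = \mathrm{WReach}_{t-1}[\sigma, v]$ for all $\sigma, v$, whence $\wcol_\infty(G) \le \wcol_{t-1}(G)$.

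For the second part I would prove the two inequalities in turn. For $\wcol_\infty(G) \le \td(G)$, fix an elimination forest $F$ of $G$ of height $\td(G)$ --- so $V(F) = V(G)$ and every edge of $G$ joins a vertex to one of its $F$-ancestors --- and let $\sigma$ be any order in which $F$-ancestors precede $F$-descendants. If $u \in \mathrm{WReach}_\infty[\sigma, v]$ is witnessed by a path $P$ with $\sigma$-minimum $u$, then every vertex of $P$ is a descendant of $u$ in $F$: otherwise, tracing $P$ from $v$, some edge $xy$ of $P$ has $x$ a descendant of $u$ but $y$ not, and then $F$-comparability of $x$ and $y$ forces $y$ to be either an $F$-descendant of $x$ (hence of $u$) or a strict $F$-ancestor of $u$ (hence $\sigma$-smaller than $u$) --- both impossible. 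In particular $v$ is a descendant of $u$, so $\mathrm{WReach}_\infty[\sigma, v]$ is contained in the set consisting of $v$ and its $F$-ancestors, which has at most $\td(G)$ elements. For $\td(G) \le \wcol_\infty(G)$, fix $\sigma$ and induct on $\abs{V(G)}$: a disconnected $G$ reduces to its components (with $\sigma$ restricted), and if $G$ is connected and nonnull, let $v^\star$ be its $\sigma$-minimum vertex. Every weak-reachability witness in $G - v^\star$ is also one in $G$ (its $\sigma$-minimum is unchanged and differs from $v^\star$), while $v^\star \in \mathrm{WReach}_\infty[\sigma, v]$ for every $v \ne v^\star$ since $G$ is connected; hence $\abs{\mathrm{WReach}_\infty^{G - v^\star}[\sigma, v]} \le \abs{\mathrm{WReach}_\infty^{G}[\sigma, v]} - 1$ for all such $v$. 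The inductive hypothesis gives $\td(G - v^\star) \le \max_v \abs{\mathrm{WReach}_\infty^{G}[\sigma, v]} - 1$, so $\td(G) \le \td(G - v^\star) + 1 \le \max_v \abs{\mathrm{WReach}_\infty^{G}[\sigma, v]}$, and minimising over $\sigma$ finishes the argument.

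Nothing here is a genuine obstacle: the only mildly delicate step is the ancestor argument in the first inequality of the second part, and if one prefers, that inequality can simply be quoted from the cited lecture notes of Pilipczuk, Pilipczuk, and Siebertz.
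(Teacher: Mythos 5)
Your proof is correct and consists of exactly the standard arguments: the paper gives no proof of this folklore lemma (it remarks that part~(i) follows from the definition and cites the Pilipczuk--Pilipczuk--Siebertz notes for part~(ii)), and your minimal-witness-path argument for (i) together with the elimination-forest upper bound and greedy-deletion lower bound for (ii) are precisely the proofs those sources have in mind. The only ingredient you use beyond the paper's recursive definition of $\td$ is the equivalence of treedepth with the minimum height of an elimination forest, which is itself folklore and reasonable to quote here.
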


The treewidth of a graph $G$ is denoted by $\tw(G)$.
Grohe et al.~\cite{Grohe15} showed a bound on weak colouring numbers in the class of graphs of bounded treewidth.

\begin{theorem}\label{wcol:bd:tw}
    Let $G$ be a graph, and let $k$ and $r$ be positive integers.
    If $\tw(G) \leq k$, then $\wcol_r(G) \leq \binom{k+r}{r}$.
\end{theorem}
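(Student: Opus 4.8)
The statement to prove is \cref{wcol:bd:tw}: if $\tw(G) \le k$ then $\wcol_r(G) \le \binom{k+r}{r}$. I would prove this by working with an optimal tree decomposition and carefully choosing a vertex ordering together with a bound on weakly $r$-reachable sets. Recall that $\wcol_r(G)$ is the minimum over linear orders $\sigma$ of $V(G)$ of the maximum over vertices $v$ of $\abs{\mathrm{WReach}_r[G, \sigma, v]}$, where $\mathrm{WReach}_r[G,\sigma,v]$ is the set of vertices $u \le_\sigma v$ such that some path from $v$ to $u$ of length at most $r$ has $v$ as its $\sigma$-minimum. So the task is to exhibit one good ordering.

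The key step is the choice of ordering. Fix a tree decomposition $(T, \{B_x\}_{x \in V(T)})$ of width $\le k$, so every bag has at most $k+1$ vertices; root $T$ at an arbitrary node. This induces a natural partial order on $V(G)$: assign to each vertex $v$ the highest (closest to the root) node $x(v)$ whose bag contains $v$, and order the vertices so that $u <_\sigma v$ whenever $x(u)$ is a strict ancestor of $x(v)$ in $T$ (breaking ties within a bag and across incomparable nodes arbitrarily but consistently). The crucial structural fact is the standard tree-decomposition separation property: if $u \in \mathrm{WReach}_r[G,\sigma,v]$, then $u$ lies in a bag on the root-to-$x(v)$ path in $T$ — indeed, a path witnessing weak $r$-reachability stays (except possibly at its top) in the subtree rooted at $x(v)$, so $u$ must be in $B_{x(v)}$ itself or be "captured" at one of the at most ... ancestors. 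To get the clean binomial bound one argues by induction on $r$: let $W_r(v) = \mathrm{WReach}_r[G,\sigma,v]$ and show $\abs{W_r(v) \setminus \{v\}}$ can be charged to the bags along the path, with the count satisfying the Pascal recursion $\binom{k+r}{r} = \binom{k+r-1}{r} + \binom{k+r-1}{r-1}$. Concretely, any $u \in W_r(v)$, $u \ne v$, is reached by a path $v = w_0, w_1, \dots, w_\ell = u$ with $\ell \le r$ and $w_i >_\sigma v$ forbidden — rather $v$ is the minimum — so $w_1 \in W_{r-1}(v)$-type set (it is a neighbour of $v$, one step "up"), and $u \in W_{r-1}(w_1)$ relative to the part of the ordering above $v$. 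Iterating this and using that at each "level" the relevant vertices live in a bag of size $\le k+1$ gives the recursive bound.

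I would organize the induction cleanly as follows: prove by induction on $r \ge 0$ that for every $v$, $\abs{W_r[G,\sigma,v]} \le \binom{k+r}{r}$. Base case $r = 0$: $W_0[G,\sigma,v] = \{v\}$ and $\binom{k}{0} = 1$. Inductive step: partition $W_r(v) \setminus \{v\}$ according to the neighbour of $v$ first used on the witnessing path; all such neighbours lie in $B_{x(v)}$ (by the separation property, a neighbour $w$ of $v$ with $w <_\sigma v$ must be in a bag on the root path, and being adjacent forces $w \in B_{x(v)}$), so there are at most $k$ of them (the bag has $k+1$ vertices, one of which may be $v$); moreover $W_r(v) \subseteq \{v\} \cup \bigcup_{w} W_{r-1}(w)$ where the union is over these $\le k$ neighbours and each $W_{r-1}(w)$ is counted in the ordering restricted above $v$. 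This naively gives $1 + k\binom{k+r-1}{r-1}$, which is too weak; the standard fix is to instead bound $\abs{W_r(v)}$ by summing over the at most $k+1$ possible "top bags" more carefully, or equivalently to prove the sharper statement that $W_r(v)$ intersected with vertices whose top-node is a fixed ancestor at distance... This is exactly where I expect the main obstacle to lie: extracting the precise binomial $\binom{k+r}{r}$ rather than a cruder $k^r$-type bound. The resolution (as in Grohe et al.) is to set up the induction on $r$ and $k$ simultaneously, or to track the two-dimensional recursion $a(k,r) = a(k-1,r) + a(k,r-1)$ by distinguishing whether the witnessing path's next vertex shares $v$'s top-bag (decreasing the "available bag slots", the $k$ parameter) or moves strictly up the tree (decreasing $r$). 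With the bookkeeping arranged so both recursions fire, the solution to $a(k,r) = a(k-1,r) + a(k,r-1)$ with $a(0,r) = a(k,0) = 1$ is exactly $\binom{k+r}{r}$, completing the proof. Since the excerpt uses this only as a black box, I would state it and cite \cite{Grohe15} for the full details, giving the tree-decomposition ordering and the Pascal-type recursion as the proof sketch.
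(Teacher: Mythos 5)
The paper does not prove \cref{wcol:bd:tw} at all: it is imported verbatim from Grohe et al.~\cite{Grohe15} and used purely as a black box in Section~2 (which is itself only a ``weak version'' of the main theorem). So there is no in-paper proof to compare against, and your closing move --- state the result and cite \cite{Grohe15} --- is exactly what the authors do.

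Judged as a standalone proof, however, your sketch has a genuine gap precisely where you say you expect one. The setup is right (root a width-$k$ tree decomposition, order vertices by the depth of the topmost bag containing them, use the separator property to confine weakly reachable vertices to bags on the root-to-$x(v)$ path), and you correctly observe that the naive recursion gives only $1 + k\binom{k+r-1}{r-1}$, which does not telescope to $\binom{k+r}{r}$. But the fix is left as ``arrange the bookkeeping so both recursions fire,'' and the case split you propose --- next vertex shares $v$'s top bag versus moves strictly up the tree --- is not a well-defined inductive invariant: a single witnessing path can do both, and ``decreasing the available bag slots'' is not justified by anything in the sketch. The actual argument in \cite{Grohe15} requires an injective encoding of each weakly $r$-reachable vertex $u$ by a bounded-length non-increasing sequence (recording, along the witnessing path, the positions in the ordering where the path drops below all previous vertices), and the count $\binom{k+r}{r}$ arises as the number of such sequences. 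Without that encoding, or some equivalent two-parameter induction with a precisely stated hypothesis, the binomial bound is not established. Since the paper itself only needs the statement as a cited lemma, this is a defensible presentation choice, but it is not a proof.
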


The final piece that we need is that for every graph $G$, we have $\tw(G)+1 \leq \tdtwo(G)$.
This follows e.g.\ from the equivalent definition of treewidth via clique sums.
For more details, see the work of Rambaud~\cite{k-td}.

Finally, let $G$ be a $P_t$-free graph with $\tdtwo(G) \leq k$.
In particular, $\tw(G) \leq k-1$.
Respectively, by \cref{wcol}.\ref{wcol:wcol_infty}, \cref{wcol}.\ref{wcol:P_t-free}, and \cref{wcol:bd:tw}, we have
    \[\td(G) = \wcol_\infty(G) = \wcol_{t-1}(G) \leq \binom{k+t-2}{t-1} = \mathcal{O}(k^{t-1}).\]

\section{Preliminaries}\label{sec:preliminaries}

Let $G$ be a graph.
A vertex $v$ of $G$ is an \defin{apex} in $G$ if $v$ is adjacent in $G$ to every $u \in V(G - v)$.

The \defin{length} of a path is its number of edges.
Note that for every positive integer $t$, the length of $P_t$ is $t-1$.
The \defin{distance} between two vertices $u$ and $v$ in the same component of $G$ is the minimum length of a path between $u$ and $v$ in $G$.
Every path between $u$ and $v$ of length equal to the distance between $u$ and $v$ is called a \defin{shortest path} in $G$.
When $G$ is connected, then the \defin{diameter} of $G$ is the maximum distance between any two vertices in $G$.
We denote the diameter of $G$ by \defin{$\diam(G)$}.

Given a path $P = v_1\cdots v_n$ in $G$, we say that there is a \defin{shortcut} between $v_i$ and $v_j$ in $P$ if $|i - j| > 1$ and $v_iv_j$ is an edge in $G$.
We say that $P$ is an \defin{induced path} in $G$, if there is no shortcut in $P$.
Note that if $P$ is a shortest path in $G$, then it is also an induced path in $G$.
Given paths $P = v_1 \cdots v_n v$ and $Q = vu_1 \cdots u_m$, such that their only common vertex is $v$, we define the concatenation of $P$ and $Q$ as $\defin{PvQ} = v_1 \cdots v_n v u_1 \cdots u_m$.
If $G$ is a graph and $P = v_1 \cdots v_n$ and $Q = u_1 \cdots u_m$ are disjoint paths in $G$ such that $v_nu_1$ is an edge in $G$, then we define the concatenation of $P$ and $Q$ as $\defin{PQ} = v_1 \cdots v_n u_1 \cdots u_m$.

We denote by \defin{$\calC(G)$} the set of components of $G$.
A vertex $v$ of $G$ is a \defin{cut-vertex} of $G$ if $|\calC(G - v)| > |\calC(G)|$. 
We write \defin{$\calA(G)$} for the set of all cut-vertices of $G$. 
A subgraph $B \subseteq G$ is a \defin{block} of $G$ if $B$ is a maximal subgraph of $G$ such that $B$ has no cut-vertex. 
We use \defin{$\calB(G)$} to denote the set of blocks of $G$. 
The \defin{forest of blocks} of $G$ is the graph \defin{$\btree(G)$} with vertex set $\calB(G) \cup \calA(G)$ such that we place an edge between a block $B$ and a cut-vertex $v$ whenever $v \in V(B)$. 
It is easy to see that $\btree(G)$ is a forest.
See \cref{fig:btree} for an example.
The next observation summarizes some basic properties of forests of blocks.

\begin{obs}
    Let $G$ be a graph.
    \begin{enumerate}
        \item \label{obs:connected} $G$ is connected if and only if $\btree(G)$ is a tree.
        \item \label{obs:bipartite} $\btree(G)$ is bipartite with every edge being between vertices in $\calA(G)$ and $\calB(G)$.
        \item \label{obs:leafs} All leaves of $\btree(G)$ are in $\calB(G)$.
        \item \label{obs:odd} If $G$ is nonnull, then every maximal path in $F(G)$ has odd number of vertices.
        \item \label{obs:even-diam} If $G$ is connected, then $\diam(\btree(G))$ is even.
    \end{enumerate}
\end{obs}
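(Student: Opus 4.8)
The plan rests on two standard structural facts about blocks, which I would record first: \textbf{(a)} any two distinct blocks of $G$ meet in at most one vertex, and any such shared vertex is a cut-vertex; and \textbf{(b)} a vertex $v$ is a cut-vertex if and only if it lies in at least two blocks. For \textbf{(a)}: if distinct blocks $B, B'$ shared two vertices, then $B \cup B'$ would be connected with no cut-vertex (any vertex can be bypassed via a path through $B$ and a path through $B'$), contradicting maximality of a block; and if $B, B'$ meet in $v$, then deleting $v$ separates $V(B) \setminus \{v\}$ from $V(B') \setminus \{v\}$, so $v$ is a cut-vertex. For \textbf{(b)}: if $v$ is a cut-vertex, take neighbours $u_1, u_2$ of $v$ lying in distinct components of $G - v$; the edges $vu_1$ and $vu_2$ cannot lie in one block (such a block would contain a $u_1$--$u_2$ path avoiding $v$), so $v$ is in $\ge 2$ blocks; the converse is the second half of \textbf{(a)}. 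With these, part~(ii) is immediate from the definition of $\btree(G)$, since every edge joins $\calB(G)$ to $\calA(G)$ and these two sets are disjoint (one consists of subgraphs, the other of vertices); and part~(iii) follows since any cut-vertex lies in $\ge 2$ blocks by \textbf{(b)} and hence has degree $\ge 2$ in $\btree(G)$, so is not a leaf.

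For part~(i), I would observe that $\btree(G)$ is always a forest: a cycle in it would, by bipartiteness, run through distinct blocks $B_1, \dots, B_m$ and distinct cut-vertices, and $B_1 \cup \cdots \cup B_m$ would then be connected with no cut-vertex and strictly larger than $B_1$, contradicting \textbf{(a)} and maximality. Moreover, each block and each cut-vertex lies in a unique component of $G$; blocks in distinct components of $G$ share no vertex, while within one component of $G$ the block--cut graph is connected (walk along a path of $G$ between two blocks, passing through the cut-vertices at which consecutive blocks meet). Hence the components of $\btree(G)$ correspond exactly to those of $G$, so $\btree(G)$ --- always a forest --- is a tree exactly when $G$ is connected.

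Finally, parts~(iv) and~(v) are combinatorial consequences. A maximal path $P$ in the forest $\btree(G)$ lies inside one of its tree components and has both endpoints of degree at most $1$ there: an endpoint of degree $\ge 2$ would, since trees are acyclic, have a neighbour off $P$ and could be used to extend $P$. If that component is a single vertex, then $P$ has one vertex, which lies in $\calB(G)$ since it is a block. Otherwise both endpoints of $P$ are leaves, hence in $\calB(G)$ by part~(iii); and since $P$ alternates between $\calB(G)$ and $\calA(G)$ by part~(ii) and begins and ends in $\calB(G)$, it has an odd number of vertices. This proves~(iv). For~(v), when $G$ is connected, $\btree(G)$ is a tree by~(i), and its diameter is the length of a longest path, which is maximal; by~(iv) that path has an odd number of vertices, so its length --- the diameter --- is even. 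The one step needing genuine care is the verification of facts \textbf{(a)} and \textbf{(b)}, where the three possible shapes of a block (a maximal $2$-connected subgraph, a cut edge with its endpoints, an isolated vertex) must each be accommodated; everything else is routine bookkeeping about bipartite trees.
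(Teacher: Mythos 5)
Your proof is correct. The paper gives no argument for this Observation at all --- it is stated as a routine consequence of the block structure (the text only remarks ``It is easy to see that $\btree(G)$ is a forest'') --- and your write-up is exactly the standard verification one would supply: the two auxiliary facts \textbf{(a)} and \textbf{(b)} about blocks and cut-vertices, acyclicity of the block--cut graph, the correspondence between components of $G$ and of $\btree(G)$, and the leaf/alternation argument giving the parity statements (iv) and (v).
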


\begin{lemma}\label{lem:apex-td2}
    Let $G$ be a connected graph and let $v \in V(G)$ be an apex in $G$.
    Then, $\tdtwo(G) = \tdtwo(G - v) + 1$ and $\td(G) = \td(G - v) + 1$.
\end{lemma}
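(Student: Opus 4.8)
The plan is to prove both equalities by induction on the number of vertices of $G$, treating the treedepth statement and the $2$-treedepth statement in parallel. Before the induction I would record one small auxiliary fact: for every graph $H$ and every $u \in V(H)$ one has $\td(H-u) \ge \td(H) - 1$ and $\tdtwo(H-u) \ge \tdtwo(H) - 1$; equivalently, deleting a vertex decreases (2-)treedepth by at most $1$. For $\td$ this is immediate from the recursive definition: if $H$ is connected it is a special case of $\td(H) = \min_w \td(H - w) + 1$, and if $H$ is disconnected one restricts attention to the component containing $u$. For $\tdtwo$ one argues the same way after splitting into two cases according to whether $u$ is a cut vertex: a block of $H$ either survives untouched in $H - u$ (if it avoids $u$) or breaks up into smaller blocks (if it contains $u$, in which case $\tdtwo(B) \le \tdtwo(B - u) + 1$ by the definition), so the maximum over blocks cannot drop by more than $1$.

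Next I would dispose of the ``$\le$'' inequalities. For treedepth this is trivial: $G$ is connected, so $\td(G) = \min_{u \in V(G)} \td(G - u) + 1 \le \td(G - v) + 1$. For $2$-treedepth the key observation is that \emph{every block of $G$ contains $v$}: if a block $B$ avoided $v$, then adding $v$ together with all of its edges to $V(B)$ would yield a strictly larger connected subgraph with no cut vertex (removing $v$ leaves the connected graph $B$, and removing any $w \in V(B)$ leaves a graph in which $v$ is adjacent to everything), contradicting maximality of $B$. Hence distinct blocks of $G$ meet exactly in $v$, the graph $G - v$ is the disjoint union of the graphs $B - v$ over the blocks $B$ of $G$, and therefore $\tdtwo(G) = \max_B \tdtwo(B)$ while $\tdtwo(G - v) = \max_B \tdtwo(B - v)$.

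Now the induction. The base case $|V(G)| = 1$ is clear, both sides equalling $1 = 0 + 1$. For the inductive step and treedepth: $G$ is connected, so $\td(G) = \min_{u \in V(G)} \td(G - u) + 1$; the choice $u = v$ contributes $\td(G - v) + 1$, and for any $u \ne v$ the graph $G - u$ is again connected with $v$ an apex, so by the inductive hypothesis and the auxiliary fact $\td(G - u) = \td((G - v) - u) + 1 \ge \td(G - v)$. Thus the minimum equals $\td(G - v) + 1$. For $2$-treedepth I would split on whether $G$ is a block. If $G$ is a block, the argument is identical, using that for every $u \ne v$ the graph $G - u$ is connected (a block on at least three vertices is $2$-connected) with $v$ still an apex. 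If $G$ is not a block, then each block $B$ of $G$ has strictly fewer vertices, is connected, and has $v$ as an apex, so the inductive hypothesis gives $\tdtwo(B) = \tdtwo(B - v) + 1$; taking the maximum over blocks and invoking the decomposition above yields $\tdtwo(G) = \max_B \tdtwo(B) = \max_B \tdtwo(B - v) + 1 = \tdtwo(G - v) + 1$.

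I expect the treedepth half to be essentially routine; the real care goes into the $2$-treedepth side. The two delicate points there are the structural claim that $v$ lies in every block of $G$ --- without it the clean decomposition of $G - v$ into the pieces $B - v$ fails, since two $v$-free blocks can share a different cut vertex --- and the auxiliary statement that deleting a vertex changes $\tdtwo$ by at most $1$, whose proof is a short but slightly fussy case analysis on cut vertices. Neither step is deep, but they are where the argument must be written carefully; the remainder is bookkeeping around the two recursive definitions.
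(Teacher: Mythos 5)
Your proof is correct, but it takes a genuinely different and longer route than the paper's. The paper's argument is a three-line substitution trick: writing $\mathrm{p}$ for either parameter, the inequality $\mathrm{p}(G) \leq \mathrm{p}(G-v)+1$ is immediate, and for the converse one picks $u$ attaining $\mathrm{p}(G) = \mathrm{p}(G-u)+1$ and observes that $G-v$ is isomorphic to a subgraph of $G-u$ (send $u$ to $v$ and fix everything else; this is edge-preserving precisely because $v$ is an apex), so subgraph-monotonicity gives $\mathrm{p}(G-u) \geq \mathrm{p}(G-v)$ and hence $\mathrm{p}(G) \geq \mathrm{p}(G-v)+1$. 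You instead run an induction on $\abs{V(G)}$ through the recursive definitions, supported by two structural facts: every block of $G$ contains the apex $v$ (so $G-v$ decomposes as the disjoint union of the graphs $B-v$ over blocks $B$), and deleting a vertex lowers $\td$ or $\tdtwo$ by at most one. Both facts are sound, and your block observation is a nice explicit piece of structure that the paper never states. What the paper's route buys is brevity and uniformity --- one argument covers $\td$ and $\tdtwo$ simultaneously with no case split on whether $G$ is a block; what yours buys is that it stays entirely inside the recursive definitions and makes the block structure around an apex transparent (and, unlike the paper, it does not need to assert the existence of a vertex $u$ with $\tdtwo(G)=\tdtwo(G-u)+1$, which is not immediate from the definition when $G$ is connected but not a block). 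Note that both arguments ultimately lean on the same folklore input, monotonicity of $\td$ and $\tdtwo$ under subgraphs: the paper invokes it directly, and you need it to pass from $\tdtwo(B) \leq \tdtwo(B-u)+1$ to a bound on $\tdtwo(H-u)$ in your auxiliary fact, so that ``slightly fussy'' step should not be omitted from a full write-up; likewise, in your ``$\leq$'' paragraph for $\tdtwo$ you should add the one remaining line $\tdtwo(B) \leq \tdtwo(B-v)+1$ (valid since each $B$ is a block) to actually conclude $\tdtwo(G) \leq \tdtwo(G-v)+1$ from the two displayed identities.
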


\begin{proof}
    It is clear that $\td(G) \leq \td(G-v)+1$.
    Next, let $u \in V(G)$ be such that $\td(G) = \td(G-u)+1$.
    Note that $G-v$ is isomorphic to a subgraph of $G - u$, hence, $\td(G-u) \geq \td(G-v)$.
    It follows that $\td(G) = \td(G-u)+1 \geq \td(G-v)+1$, which ends the proof for treedepth.

    Let $C$ be a component of $G - v$ and let $H = G[V(C) \cup \{v\}]$.
    We claim that $\td_2(H) = \td_2(C) + 1$.
    It is clear that $\td_2(H) \leq \td_2(C)+1$.
    Note that $H$ is a block of $G$.
    Let $u \in V(H)$ be such that $\td_2(H) = \td_2(H-u)+1$.
    Note that $H-v$ is isomorphic to a subgraph of $H - u$, hence, $\td_2(H-u) \geq \td_2(H-v)$.
    It follows that $\td_2(H) = \td_2(C) + 1$, as claimed.
    We have
    \[\td_2(G) = \max_{H \in \calB(G)} \td_2(H) = \max_{C \in \calC(G-v)} \td_2(C) + 1 = \td_2(G-v) + 1.\]
    This completes the proof.
\end{proof}

\begin{figure}[tp]
    \centering
    \includegraphics{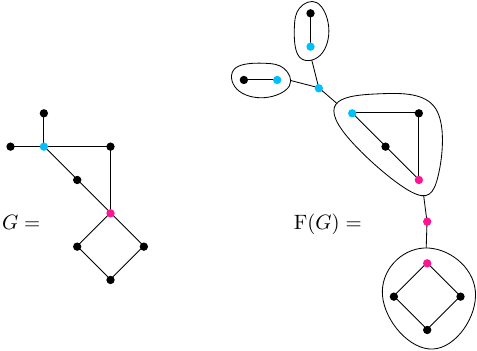}
    \caption{A graph $G$ and its corresponding $\btree(G)$.}
    \label{fig:btree}
\end{figure}

\section{Treedepth of \texorpdfstring{$P_t$}{Pt}-free graphs}\label{sec:proof}

In this section, we prove \cref{thm:main}.
In fact, we prove a slightly stronger result, \cref{lem:td-f}.
To state it, we need some more notation.

Given a graph $G$ and $S \subset V(G)$, we say that a path in $G$ is an \defin{$S$-path} if both its endpoints are in $S$.
Following Hodor, La, Micek, and Rambaud~\cite{quickly-excluding-apex-forest}, we define a variant of treedepth focused on a prescribed set of vertices.
The definition is recursive.
For a graph $G$ and $S \subset V(G)$, we denote their treedepth by \defin{$\td(G,S)$}, and we set
\[
   \td(G,S) =
   \begin{cases}
     0 & \text{if $S=\emptyset$,} \\
     \max\{\td(C, S \cap V(C)) : C\in\mathcal{C}(G)\} & \text{if $S\neq\emptyset$ and $|\mathcal{C}(G)|>1$,} \\
     1+\min\{\td(G-v, S \setminus \{v\}) : v\in V(G)\} & \text{otherwise.}
   \end{cases}
\]
Let us mention several key properties of this notion -- let $G$ be a graph and $S \subset V(G)$.
First, we have $\td(G) = \td(G,V(G))$.
Second, we have $\td(G,S) \leq |S|$.
Finally, we have the following simple statement.

\begin{lemma}\label{lem:apply-elimination}
    Let $G$ be a graph and let $S,S' \subset V(G)$.
    There exists $X \subset V(G)$ such that $S' \subset X$ and 
        \[\td(G,S) \leq \td(G,S') + \td(G - X, S \setminus X).\]
\end{lemma}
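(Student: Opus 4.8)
The plan is to induct on $\abs{V(G)}$, reading off $X$ from an optimal elimination realizing $\td(G, S')$: concretely, $X$ will be (a superset of) the set of vertices deleted along the way when computing $\td(G, S')$. Two configurations are trivial and need no induction. If $S = \emptyset$, take $X = S'$; then $\td(G - X, S \setminus X) = \td(G - S', \emptyset) = 0$ and $\td(G, S) = 0 \le \td(G, S')$, so the inequality holds. If $S' = \emptyset$, take $X = \emptyset$; the inequality reads $\td(G, S) \le \td(G, S)$. So from now on assume $S, S' \ne \emptyset$, and in particular $G$ is nonnull.

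The inductive step splits according to the recursion for $\td(G, S')$. If $G$ is disconnected, with components $C_1, \dots, C_m$ (so $m \ge 2$ and each $\abs{V(C_j)} < \abs{V(G)}$), apply the inductive hypothesis to every $C_j$ with the pair $S \cap V(C_j)$, $S' \cap V(C_j)$, obtaining $X_j \subseteq V(C_j)$ with $S' \cap V(C_j) \subseteq X_j$ and $\td(C_j, S \cap V(C_j)) \le \td(C_j, S' \cap V(C_j)) + \td(C_j - X_j, (S \cap V(C_j)) \setminus X_j)$. Put $X = X_1 \cup \dots \cup X_m$, so $S' \subseteq X$ and $X \cap V(C_j) = X_j$. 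Since $S, S' \ne \emptyset$ and $m > 1$, the definition gives $\td(G, S) = \max_j \td(C_j, S \cap V(C_j))$ and $\td(G, S') = \max_j \td(C_j, S' \cap V(C_j))$; moreover each $C_j - X_j$ is a union of components of $G - X$, with $(S \setminus X) \cap V(C_j) = (S \cap V(C_j)) \setminus X_j$, so monotonicity of $\td(\cdot, \cdot)$ under passing to an induced subgraph yields $\td(C_j - X_j, (S \cap V(C_j)) \setminus X_j) \le \td(G - X, S \setminus X)$. Combining these three observations gives the claim. If instead $G$ is connected, choose $v \in V(G)$ realizing the minimum in $\td(G, S') = 1 + \min_u \td(G - u, S' \setminus \{u\})$, apply the inductive hypothesis to $G - v$ (which has fewer vertices) with the sets $S \setminus \{v\}$ and $S' \setminus \{v\}$, obtaining $X' \subseteq V(G) \setminus \{v\}$ with $S' \setminus \{v\} \subseteq X'$ and $\td(G - v, S \setminus \{v\}) \le \td(G - v, S' \setminus \{v\}) + \td((G - v) - X', (S \setminus \{v\}) \setminus X')$, and set $X = X' \cup \{v\}$. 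Then $S' \subseteq X$, $(G - v) - X' = G - X$, and $(S \setminus \{v\}) \setminus X' = S \setminus X$, so, using that $S \ne \emptyset$ and $G$ is connected, $\td(G, S) \le 1 + \td(G - v, S \setminus \{v\}) \le \bigl(1 + \td(G - v, S' \setminus \{v\})\bigr) + \td(G - X, S \setminus X) = \td(G, S') + \td(G - X, S \setminus X)$.

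The only ingredient beyond bookkeeping is the monotonicity statement used above: if $H$ is an induced subgraph of $G$ and $T \subseteq S \cap V(H)$, then $\td(H, T) \le \td(G, S)$. This follows by a routine induction on $\abs{V(G)}$ mirroring the three cases of the definition — in the connected case, delete the vertex realizing $\td(G, S)$ and distinguish whether or not it lies in $V(H)$ — and it is in any case essentially implicit in the source introducing $\td(\cdot,\cdot)$, so I would either include it as a one-line sublemma or cite it. I do not anticipate a genuine obstacle here; the one thing to be careful about is correctly tracking the intersections of $S$, $S'$, and $X$ with the components and with the deleted vertex at each step.
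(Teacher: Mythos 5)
Your proof is correct and follows essentially the same strategy as the paper's: build $X$ by collecting the vertices deleted along an optimal elimination witnessing $\td(G,S')$, the only cosmetic difference being that you induct on $\abs{V(G)}$ and split into the connected/disconnected cases, while the paper inducts on $\td(G,S')$ and deletes one vertex per component at each step. The monotonicity fact you flag ($\td(H,T)\le\td(G,S)$ for induced subgraphs) is likewise used implicitly, without proof, in the paper's surrounding arguments, so handling it as a one-line sublemma is fine.
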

\begin{proof}
    The proof is by induction on $\ell := \td(G,S')$.
    If $\ell = 0$, then $X = \emptyset$ satisfies the required properties.
    Thus, assume that $\ell > 0$.
    For each component $C \in \calC(G)$, let $X_C$ be empty if $C$ has no vertices of $S'$, and let $X_C$ be a single vertex $v_C$ such that $\td(C,S' \cap V(C)) = \td(C-v_C,(S'\cap V(C)) \setminus \{v_C\}) + 1$ otherwise.
    Next, let $X' = \bigcup_{C \in \calC(G)} X_C$.
    It follows that $\td(G,S') = \td(G - X',S' \setminus X') + 1$.
    Moreover, $\td(G,S) \leq \td(G - X', S \setminus X') + 1$ as $\td(C,S\cap V(C)) \leq \td(C - v_C,(S \cap V(C)) \setminus \{v_C\}) + 1$ for every $C \in \calC(G)$.
    By induction applied to $G - X'$, $S \setminus X'$, and $S' \setminus X'$, there exists a set $X''$ with $S' \setminus X' \subset X'' \subset V(G -X')$ such that 
    \[ \td(G-X',S\setminus X') \leq \td(G-X',S' \setminus X') + \td(G - (X' \cup X''), S \setminus (X' \cup X'')).\]
    We set $X = X' \cup X''$ and altogether we obtain
    \begin{align*}
        \td(G,S) &\leq \td(G - X', S \setminus X') + 1\\
                 &\leq \td(G-X',S' \setminus X') + 1 + \td(G - X, S \setminus X)\\
                 &= \td(G,S') + \td(G - X, S \setminus X).\qedhere
    \end{align*}
\end{proof}

Let $G$ be a graph.
We say that an induced subgraph $H$ of $G$ is \defin{component-wise connected in $G$} if for every $C \in \calC(G)$, either $V(H) \cap V(C) = \emptyset$ or $G[V(H) \cap V(C)]$ is connected.
Let $S \subset V(G)$.
When $\calB' \subset \calB(G)$, we say that $G' = G[\bigcup_{B \in \calB'} V(B)]$ is an \defin{$S$-core} of $G$ \defin{induced} by $\calB'$ if $G'$ is component-wise connected in $G$ and $S \subset V(G')$.
We say that an $S$-core $G'$ of $G$ induced by $\calB'$ is \defin{minimal} if $\calB'$ is an inclusion-wise minimal subset of $\calB(G)$ inducing an $S$-core.
A crucial property of $S$-cores is encapsulated in the following quite straightforward statement.

\begin{lemma}\label{lem:S-cores}
    Let $G$ be a graph, let $S \subset V(G)$, let $G'$ be an $S$-core of $G$, let $C \in \calC(G)$ be such that $S \cap V(C) \neq \emptyset$, and let $v \in V(G')$.
    \begin{enumerate}
        \item $G[V(G') \cap V(C)]$ is an $(S\cap V(C))$-core of $C$.\label{lem:S-cores:components}
        \item $G'-v$ is an $(S \setminus \{v\})$-core of $G-v$.\label{lem:S-cores:vertices}
    \end{enumerate}
\end{lemma}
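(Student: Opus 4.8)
The two parts are independent; part (i) is bookkeeping about blocks inside one component, while part (ii) needs the behaviour of blocks under vertex deletion.

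\emph{Part (i).} Every block of $G$ is a connected subgraph and so lies in a single component; in particular $\calB(C) = \{B \in \calB(G) : V(B) \subseteq V(C)\}$ and blocks lying in distinct components are vertex-disjoint. Writing $G'$ as induced by $\calB' \subseteq \calB(G)$ and setting $\calB_C := \{B \in \calB' : V(B) \subseteq V(C)\}$, one gets $\bigcup_{B \in \calB_C} V(B) = \big(\bigcup_{B \in \calB'} V(B)\big) \cap V(C) = V(G') \cap V(C)$, so $G[V(G') \cap V(C)] = C\big[\bigcup_{B \in \calB_C} V(B)\big]$ is induced by $\calB_C \subseteq \calB(C)$. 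Since $C$ is connected and $V(G') \cap V(C) \supseteq S \cap V(C) \neq \emptyset$, component-wise connectedness of $G'$ in $G$ forces $G[V(G') \cap V(C)]$ to be connected, i.e.\ component-wise connected in $C$; and $S \cap V(C) \subseteq V(G') \cap V(C)$ because $S \subseteq V(G')$. Hence $G[V(G') \cap V(C)]$ is an $(S \cap V(C))$-core of $C$. (The same computation shows that for \emph{every} $C \in \calC(G)$ the graph $G[V(G') \cap V(C)]$ is a union of blocks of $C$ that is connected or empty; this is used in part (ii).)

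\emph{Part (ii).} First I would localise to the component $C_0 \in \calC(G)$ containing $v$: the components of $G - v$ are the components of $C_0 - v$ together with the components $C \neq C_0$ of $G$, and on each such $C$ nothing changes — $G[V(G') \cap V(C)]$ is a union of blocks of $C$ (hence of $G - v$) that is connected or empty, and $(S \setminus \{v\}) \cap V(C) = S \cap V(C)$. So everything reduces to the connected statement: if $C_0$ is connected, $H$ is a connected induced subgraph of $C_0$ that is a union of blocks of $C_0$, and $v \in V(H)$, then (a) for every component $D$ of $C_0 - v$ the graph $(C_0 - v)[V(H) \cap V(D)]$ is connected or empty, and (b) $V(H) \setminus \{v\}$ is a union of vertex sets of blocks of $C_0 - v$; here $H := G[V(G') \cap V(C_0)]$, which is connected since it contains $v$. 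I would deduce (a) and (b) from two facts: (1) each block of $C_0 - v$ is either a block of $C_0$ avoiding $v$, or a block of $B - v$ for some block $B$ of $C_0$ with $v \in V(B)$; and (2) because $H$ is a union of blocks of $C_0$ whose blocks form a subtree of $\btree(C_0)$ through $v$, the components of $C_0 - v$ that meet $V(H)$ correspond bijectively to the blocks of $H$ containing $v$, the one attached to a block $B$ meeting $V(H)$ exactly in the vertex set of the component of $H - v$ that contains $V(B) \setminus \{v\}$.

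The conceptual heart is the sub-claim underlying (2) and (a): distinct components $H_1, H_2$ of $H - v$ lie in distinct components of $C_0 - v$. Indeed, $H$ being connected, $v$ has a neighbour $a_i \in V(H_i)$ for $i \in \{1,2\}$; if $H_1,H_2$ lay in a common component of $C_0 - v$ there would be an $a_1$--$a_2$ path in $C_0 - v$, and adjoining the edges $va_1, va_2$ gives a cycle of $C_0$ through $v, a_1, a_2$, so these three vertices lie in a common block $B$ of $C_0$. Since $H$ is an induced subgraph of $C_0$ that is a union of blocks of $C_0$, its blocks are themselves blocks of $C_0$, so the edges $va_1, va_2 \in E(H)$ pin $B$ down as a block of $H$; then $V(B) \subseteq V(H)$ and, $B$ being $2$-connected, $B - v$ is a connected subgraph of $H - v$ joining $a_1$ to $a_2$, contradicting $H_1 \neq H_2$. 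Granting this, (a) is immediate, and (b) follows by tracing, via fact (1), the blocks of each component of $H - v$ back to a block of $H$ (hence of $C_0$) through $v$ together with the blocks of $H$ hanging below it. Assembling (a) and (b) over the components of $C_0 - v$ and the components $C \neq C_0$, and using $S \setminus \{v\} \subseteq V(G') \setminus \{v\}$, shows $G' - v$ is an $(S \setminus \{v\})$-core of $G - v$.

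The main obstacle is the low-level bookkeeping in fact (1) and in step (b): the degenerate block types — a bridge at $v$, or an isolated-vertex block $\{v\}$ — must be handled with care, and in particular one must check that $V(H) \setminus \{v\}$ is \emph{exactly} a union of blocks of $C_0 - v$, not merely a connected set. Once the cycle/block argument for the sub-claim is in place, the remaining verifications are routine.
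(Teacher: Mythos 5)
Your part~(i) is correct and is essentially the paper's own argument: restrict $\calB'$ to the blocks meeting $C$, note that these are blocks of $C$ whose vertex sets union to $V(G')\cap V(C)$, and invoke component-wise connectedness.

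Part~(ii) follows the same overall plan as the paper's proof (two putative components of $G'-v$ inside one component of $G-v$ would yield a cycle through $v$ lying in a single block of $G$, which one then tries to show lies entirely inside or entirely outside $G'$), but the pivotal step of your sub-claim does not hold. You assert that, $H$ being an induced subgraph of $C_0$ that is a union of blocks of $C_0$, ``its blocks are themselves blocks of $C_0$'', so that the block $B$ of $C_0$ containing the cycle through $v,a_1,a_2$ is a block of $H$ with $V(B)\subseteq V(H)$. This does not follow: an $S$-core is by definition the \emph{induced} subgraph on $\bigcup_{B\in\calB'}V(B)$, so it may contain edges whose (unique) block in $G$ is \emph{not} in $\calB'$ --- namely edges both of whose endpoints happen to lie in blocks of $\calB'$. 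Concretely, let $C_0$ be the $4$-cycle $v\,a_1\,w\,a_2$ together with pendant edges $vb_v$, $a_1b_1$, $a_2b_2$, and let $\calB'$ consist of the three pendant-edge blocks. Then $G'=C_0[\{v,b_v,a_1,b_1,a_2,b_2\}]$ is a connected tree (it picks up the edges $va_1$ and $va_2$ from the $4$-cycle block, which is not in $\calB'$), so it is an $S$-core for $S=\{b_v,b_1,b_2\}$, and even an inclusion-wise minimal one. Here $a_1$ and $a_2$ lie in the same component of $C_0-v$, the $4$-cycle block is not a block of $H$ and $w\notin V(H)$, and $G'-v$ genuinely splits into $\{a_1,b_1\}$ and $\{a_2,b_2\}$ inside that component. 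So your sub-claim is false as stated and the contradiction you aim for cannot be reached.

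I should add, in fairness, that this is not an artefact of your write-up: the same configuration defeats the dichotomy on which the paper's own proof of~(ii) rests (``either all vertices of $H$ are in $G'-v$ or none of them is''), because a vertex can belong to $V(G'-v)$ through a block of $\calB_v$ other than the one carrying the cycle; and indeed the example above violates the stated conclusion of~(ii) itself, since $G'-v$ fails to be component-wise connected in $C_0-v$. Any correct treatment therefore has to exclude such configurations, e.g.\ by additionally requiring that every block of $G$ containing an edge of $G'$ belongs to $\calB'$ (so that $G'$ coincides with the subgraph union $\bigcup_{B\in\calB'}B$), under which hypothesis your cycle argument, and the paper's, goes through.
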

\begin{proof}
    Suppose that $G'$ is induced by $\calB' \subset \calB(G)$.
    Let $\calB_C = \{B \in \calB' : V(B) \cap V(C) \neq \emptyset\}$.
    Note that $\calB_C \subset \calB(C)$ and $S \cap V(C) \subset V(G') \cap V(C)$.
    Since $G'$ is component-wise connected in $G$, $G[V(G') \cap V(C)]$ is connected, and so, component-wise connected in $C$.
    Finally, $V(G') \cap V(C) = \bigcup_{B \in \calB_C} V(B)$.
    Altogether, we obtain that $G[V(G') \cap V(C)]$ is an $(S\cap V(C))$-core of $C$, and so,~\ref{lem:S-cores:components} holds.
    
    Since $S \subset V(G')$, we have $S \setminus \{v\} \subset V(G'-v)$.
    For every block $B \in \calB(G-v)$ there exists a block $\beta(B) \in \calB(G)$ such that $V(B) \subset V(\beta(B))$.
    Note that $V(B) \neq V(\beta(B))$ if and only if $v \in V(\beta(B))$.
    Let $\calB_v = \{B \in \calB(G-v) : \beta(B) \in \calB'\}$.
    In particular, we have $V(G'-v) = \bigcup_{B \in \calB_v}V(B)$.
    It suffices to show that $G'-v$ is component-wise connected.
    Suppose to the contrary that there is $D \in \calC(G-v)$ such that $G[V(G'-v) \cap V(D)]$ is not connected.
    Then, there exists $C' \in \calC(G)$ such that $V(D) \subset V(C')$ and $v \in V(C')$.
    Since $G'$ is component-wise connected in $G$, the graph $G[V(G') \cap V(C')]$ is connected.
    Let $D_1,D_2$ be distinct components of $G[V(G'-v) \cap V(D)]$.
    It follows that $V(D_1)$ and $V(D_2)$ are adjacent to $v$ in $G$.
    Next, let $B_1,B_2 \in \calB_v$ be blocks contained in $D_1$ and $D_2$ respectively such that $V(B_1)$ and $V(B_2)$ are adjacent to $v$ in $G$. 
    Let $H$ be a cycle in $G$ obtained by adding $v$ to a path in $D$ between vertices of $B_1$ and $B_2$ adjacent to $v$.
    In particular, all vertices of $H$ are in the same block of $G$.
    By the definition of $\calB_v$, it follows that either all vertices of $H$ are in $G'-v$ or none of them is.
    This is a contradiction that concludes the proof of~\ref{lem:S-cores:vertices}.
\end{proof}

\begin{lemma}\label{lem:S-core}
    Let $G$ be a graph, let $S \subset V(G)$, and let $G'$ be an $S$-core of $G$.
    Then,
        \[\td(G,S) = \td(G',S).\]
\end{lemma}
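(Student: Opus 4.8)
The plan is to prove the identity by induction on $|V(G)|$, using \cref{lem:S-cores} to pass to strictly smaller instances. If $S=\emptyset$ then $\td(G,S)=0=\td(G',S)$, so assume $S\neq\emptyset$; in particular $V(G')\supseteq S\neq\emptyset$. If $G$ is disconnected, say with components $C_1,\dots,C_m$ where $m\geq 2$, then — because $G'$ is component-wise connected in $G$ — the components of $G'$ are precisely the nonnull graphs among the $G[V(G')\cap V(C_i)]$, so $\td(G',S)=\max_i\td(G[V(G')\cap V(C_i)],S\cap V(C_i))$ while $\td(G,S)=\max_i\td(C_i,S\cap V(C_i))$. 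For each $i$ with $S\cap V(C_i)\neq\emptyset$, \cref{lem:S-cores}\ref{lem:S-cores:components} tells us $G[V(G')\cap V(C_i)]$ is an $(S\cap V(C_i))$-core of $C_i$, and as $|V(C_i)|<|V(G)|$ the induction hypothesis equates the two sides; when $S\cap V(C_i)=\emptyset$ both sides are $0$. Taking the maximum over $i$ settles the disconnected case, so the real work lies in the connected case.

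So suppose $G$ is connected. If $G'=G$ there is nothing to do, so assume $G'\subsetneq G$; since $G$ is connected and $G'$ is component-wise connected with $V(G')\neq\emptyset$, the graph $G'$ is connected, and because $S\neq\emptyset$ both treedepths are computed by deleting a single vertex: $\td(G,S)=1+\min_{v\in V(G)}\td(G-v,S\setminus\{v\})$ and $\td(G',S)=1+\min_{v\in V(G')}\td(G'-v,S\setminus\{v\})$. For $\td(G,S)\leq\td(G',S)$ I would take $v\in V(G')$ attaining the minimum for $G'$; by \cref{lem:S-cores}\ref{lem:S-cores:vertices}, $G'-v$ is an $(S\setminus\{v\})$-core of $G-v$, and since $|V(G-v)|<|V(G)|$, induction gives $\td(G-v,S\setminus\{v\})=\td(G'-v,S\setminus\{v\})$, hence $\td(G,S)\leq\td(G',S)$. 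For the reverse inequality, take $v\in V(G)$ attaining the minimum for $G$. If $v\in V(G')$, the same appeal to \cref{lem:S-cores}\ref{lem:S-cores:vertices} and induction finish it. If $v\notin V(G')$, then $v\notin S$, so $S\setminus\{v\}=S$, and — granting the claim discussed below that $G'$ is again an $S$-core of $G-v$ — induction gives $\td(G-v,S)=\td(G',S)$, so $\td(G',S)=\td(G-v,S)\leq 1+\td(G-v,S)=\td(G,S)$. Combining the two inequalities yields the equality.

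The step I expect to need genuine care is exactly that claim: when $v\in V(G)\setminus V(G')$, the induced subgraph $G'$ is again an $S$-core of $G-v$. Write $G'=G[\bigcup_{B\in\calB'}V(B)]$ with $\calB'\subseteq\calB(G)$. The condition $S\subseteq V(G')$ is trivially preserved. Component-wise connectivity is also straightforward: each block in $\calB'$ avoids $v$ (as $V(B)\subseteq V(G')$), so $G'$ is a subgraph of $G-v$, and a connected subgraph of $G$ avoiding $v$ must lie inside a single component of $G-v$; hence for each $C\in\calC(G)$ the connected graph $G[V(G')\cap V(C)]$ sits inside one component of $G-v$, which is what component-wise connectivity in $G-v$ demands. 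The slightly delicate point is that $\calB'\subseteq\calB(G-v)$ and $G'=(G-v)[\bigcup_{B\in\calB'}V(B)]$: if $B\in\calB(G)$ with $v\notin V(B)$, then $B$ is a cut-vertex-free subgraph of $G-v$, and if it were not maximal as such, a strictly larger cut-vertex-free subgraph of $G-v\subseteq G$ would contradict $B$ being a block of $G$; so $B$ is a block of $G-v$. Since every block in $\calB'$ avoids $v$, the same union of vertex sets exhibits $G'$ as an $S$-core of $G-v$, which closes the one gap and hence completes the induction.
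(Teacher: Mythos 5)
Your proof is correct and follows essentially the same route as the paper: induction on $|V(G)|$, with \cref{lem:S-cores} supplying the smaller instances in both the disconnected and the connected case. The only difference is that the paper dismisses the inequality $\td(G',S)\leq\td(G,S)$ as immediate (monotonicity of $\td(\cdot,S)$ under passing to induced subgraphs containing $S$) and only proves $\td(G,S)\leq\td(G',S)$, whereas you re-derive the easy direction inside the induction via the (correct, but not strictly necessary) observation that $G'$ remains an $S$-core of $G-v$ when $v\notin V(G')$.
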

\begin{proof}
    The proof is by induction on $|V(G)|$.
    When $|V(G)| = 0$ or even $|S| = 0$, the assertion clearly holds.
    Thus, we assume $|S| > 0$.
    We have $\td(G',S) \leq \td(G,S)$, hence, it suffices to show $\td(G,S) \leq \td(G',S)$.
    
    First, assume that $|\calC(G)| > 1$.
    Let $C \in \calC(G)$ be such that $\td(G,S) = \td(C,S \cap V(C))$.
    By \cref{lem:S-cores}.\ref{lem:S-cores:components}, $C' = G[V(G') \cap V(C)]$ is an $(S\cap V(C))$-core of $C$.
    Thus, applying induction to $C$, $S \cap V(C)$, and $C'$, we obtain $\td(C,S \cap V(C)) = \td(C',S \cap V(C))$.
    Again, by definition, we also have $\td(C',S \cap V(C)) \leq \td(G',S)$.
    Altogether, as desired:
    \[\td(G,S) = \td(C,S\cap V(C)) = \td(C',S \cap V(C)) \leq \td(G',S).\]

    Next, assume that $G$ is connected.
    Let $v \in V(G')$ be such that $\td(G',S) = \td(G'-v,S\setminus \{v\}) + 1$. 
    Note that $\td(G,S) \leq \td(G-v,S\setminus \{v\}) + 1$.
    By \cref{lem:S-cores}.\ref{lem:S-cores:vertices}, $G'-v$ is an $(S \setminus \{v\})$-core of $G-v$.
    Therefore, applying induction to $G-v$, $S\setminus \{v\}$, and $G'-v$, we obtain $\td(G-v,S\setminus \{v\}) = \td(G'-v,S\setminus\{v\})$.
    Altogether, as desired:
    \[\td(G,S) \leq \td(G-v,S\setminus \{v\}) + 1 = \td(G'-v,S\setminus\{v\}) + 1 =  \td(G',S). \qedhere\]
\end{proof}

We define recursively a function $f(k,t)$ for all integers $k \geq 1$ and $t \geq 2$:
    \[
       f(k,t) =
       \begin{cases}
         1 & \text{if $k = 1$,} \\
         1 & \text{if $t = 2$,} \\
         k & \text{if $t = 3$,} \\
         f(k,t-2) + f(k-1,t) + 1 & \text{otherwise.}
       \end{cases}
    \]
Note that $f$ is nondecreasing, we will use this fact implicitly.
\begin{lemma}\label{lem:computation}
    For all positive integers $k \geq 1$ and $t \geq 2$, we have
        \[f(k,t) < 2 \binom{\lfloor (t-1) \slash 2 \rfloor+k-1}{\lfloor (t-1) \slash 2 \rfloor}.\]
\end{lemma}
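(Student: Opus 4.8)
The plan is to prove the bound by induction on $t$ (really on $\lfloor (t-1)/2 \rfloor$), handling the recursion in the definition of $f$ directly. Write $m = \lfloor (t-1)/2 \rfloor$, so the claimed upper bound is $2\binom{m+k-1}{m}$. First I would dispose of the base cases: when $k = 1$ we have $f(1,t) = 1$ and $2\binom{m}{m} = 2 > 1$; when $t = 2$ we have $m = 0$, $f(k,2) = 1$, and $2\binom{k-1}{0} = 2 > 1$; when $t = 3$ we have $m = 1$, $f(k,3) = k$, and $2\binom{k}{1} = 2k > k$. So all base cases hold with room to spare.

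For the inductive step, assume $k \geq 2$ and $t \geq 4$, so $m \geq 1$, and note that $\lfloor((t-2)-1)/2\rfloor = m - 1$ while $\lfloor(t-1)/2\rfloor$ is unchanged when we drop $k$ to $k-1$. The recursion gives
\[
f(k,t) = f(k,t-2) + f(k-1,t) + 1.
\]
Applying the inductive hypothesis to both terms (the first has smaller $t$, the second has smaller $k$), I get
\[
f(k,t) < 2\binom{m-1+k-1}{m-1} + 2\binom{m+k-2}{m} + 1.
\]
The goal is to show this is at most $2\binom{m+k-1}{m}$. By Pascal's identity, $\binom{m+k-1}{m} = \binom{m+k-2}{m} + \binom{m+k-2}{m-1}$, and $\binom{m+k-2}{m-1} = \binom{(m-1)+(k-1)}{m-1}$, so
\[
2\binom{m+k-1}{m} = 2\binom{m-1+k-1}{m-1} + 2\binom{m+k-2}{m},
\]
which is exactly the sum of the two binomial terms above. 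Hence $f(k,t) < 2\binom{m+k-1}{m}$, with the strict inequality coming from the $+1$ being absorbed (indeed the inductive bounds are already strict, so we even have $f(k,t) \leq 2\binom{m+k-1}{m} - 2 + 1 < 2\binom{m+k-1}{m}$).

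**The main subtlety**, such as it is, lies in making sure the induction is well-founded and the floor arithmetic lines up: one must check that $t - 2 \geq 2$ and $k - 1 \geq 1$ whenever the "otherwise" branch of the definition fires (it does, since that branch requires $k \geq 2$ and $t \geq 4$), and that $\lfloor((t-2)-1)/2\rfloor = \lfloor(t-1)/2\rfloor - 1$ exactly (true for all $t$). There is no real obstacle here — the recursion for $f$ was evidently designed so that it unrolls against Pascal's triangle, and the factor of $2$ is precisely what makes the base cases and the "$+1$" slack work out. I would present it as a short induction, citing \cref{lem:computation}'s statement and carrying out the two-line binomial identity explicitly in a single \texttt{align*}.
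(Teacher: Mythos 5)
Your proof is correct and follows essentially the same route as the paper: both unroll the recursion $f(k,t)=f(k,t-2)+f(k-1,t)+1$ against Pascal's identity, the only difference being that the paper establishes the exact value $f(k,t)=2\binom{\lfloor (t-1)/2\rfloor+k-1}{\lfloor (t-1)/2\rfloor}-1$ for even $t$ and then handles odd $t$ via the monotonicity $f(k,t)\leq f(k,t+1)$, whereas you prove the strict inequality directly for all $t$ and absorb the ``$+1$'' using integrality of the strict inductive bounds. One small correction: the induction should be declared on $k+t$ (as in the paper) rather than on $t$ or on $\lfloor (t-1)/2\rfloor$ alone, since you apply the inductive hypothesis to $f(k-1,t)$, which has the same value of $t$.
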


\begin{proof}
    We will in fact prove that, for all even $t$,
    \begin{equation}\label{eq:f}
        f(k, t) = 2 \binom{\lfloor (t-1) \slash 2 \rfloor+k-1}{\lfloor (t-1) \slash 2 \rfloor} - 1.
    \end{equation}
    The result will then follow, since for odd $t$,
    \[
        f(k, t) \leq f(k, t + 1) = 2 \binom{\lfloor t \slash 2 \rfloor+k-1}{\lfloor t \slash 2 \rfloor} - 1 = 2 \binom{\lfloor (t-1) \slash 2 \rfloor+k-1}{\lfloor (t-1) \slash 2 \rfloor} - 1.
    \]
    We will prove \eqref{eq:f} by induction on $k + t$. For $t = 2$ or $k = 1$, \eqref{eq:f} holds as both sides are equal to 1. Fix $k > 1$ and an even $t > 2$. By the recursion and induction, we have
    \begin{align*}
        f(k, t) & = f(k, t - 2) + f(k - 1, t) + 1 \\
        & = 2 \binom{\lfloor (t-3) \slash 2 \rfloor+k-1}{\lfloor (t-3) \slash 2 \rfloor} - 1 + 2 \binom{\lfloor (t-1) \slash 2 \rfloor+k-2}{\lfloor (t-1) \slash 2 \rfloor} - 1 + 1 \\
        & = 2\biggl[\binom{\lfloor (t-1) \slash 2 \rfloor+k-2}{\lfloor (t-1) \slash 2\rfloor - 1} + \binom{\lfloor (t-1) \slash 2 \rfloor+k-2}{\lfloor (t-1) \slash 2 \rfloor} \biggr] - 1 \\
        & = 2 \binom{\lfloor (t-1) \slash 2 \rfloor+k-1}{\lfloor (t-1) \slash 2 \rfloor} - 1. \qedhere
    \end{align*}
\end{proof}

The next lemma implies \cref{thm:main}, by taking $S = V(G)$ and applying \cref{lem:computation}.

\begin{lemma}\label{lem:td-f}
    For all integers $k \geq 1$ and $t \geq 2$, for every graph $G$ and every $S\subset V(G)$ with no induced $S$-path on at least $t$ vertices such that $\tdtwo(G) \leq k$, we have
        \[\td(G,S) \leq f(k,t).\]
\end{lemma}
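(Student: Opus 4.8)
The plan is to prove \cref{lem:td-f} by induction on $k+t$, so that the two recursive calls implicit in $f(k,t)=f(k,t-2)+f(k-1,t)+1$ --- one to $(k,t-2)$, one to $(k-1,t)$ --- are genuinely to smaller instances, with a secondary induction on $|V(G)|$ handling some structural reductions. I read the hypothesis ``$G$ has no induced $S$-path $P_t$'' as ``every induced $S$-path of $G$ has at most $t-1$ vertices''; this is the natural notion for the induction and specialises to ``$P_t$-free'' when $S=V(G)$.

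\emph{Reductions (not changing $k,t$).} If $G$ is disconnected then $\td(G,S)=\max_{C\in\calC(G)}\td(C,S\cap V(C))$, each component inherits all hypotheses on a smaller vertex set, and the secondary induction applies; so assume $G$ is connected. By \cref{lem:S-core} we may replace $G$ by a minimal $S$-core $G'$ of itself (same $\td(\cdot,S)$), and since $G'$ is a subgraph of $G$ we keep $\tdtwo(G')\le k$ (monotonicity of $\tdtwo$ under subgraphs, as already used in \cref{lem:apex-td2}) and the $S$-path condition; if $G'\subsetneq G$ the secondary induction finishes. So assume $G$ is a connected graph equal to its own minimal $S$-core. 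The crucial feature this buys is: \emph{every leaf of $\btree(G)$ is a block containing a non-cut-vertex that lies in $S$} --- otherwise deleting the non-cut-vertices of such a leaf block (none of which would be in $S$) would yield a strictly smaller $S$-core.

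\emph{Base cases.} If $k=1$ then $\tdtwo(G)\le 1$ forces $G$ edgeless, so $\td(G,S)\le 1=f(1,t)$. If $t=2$, the hypothesis says no two vertices of $S$ share a component, so $\td(G,S)\le\max_{C}|S\cap V(C)|\le 1=f(k,2)$. If $t=3$, any two vertices of $S$ in a common component are adjacent, so each $S\cap V(C)$ is a clique, of size at most $\tw(C)+1\le\tdtwo(C)\le k$, whence $\td(G,S)\le k=f(k,3)$.

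\emph{Inductive step ($k\ge 2$, $t\ge 4$, $G$ a connected minimal $S$-core).} If $G$ is a single block, choose $v$ with $\tdtwo(G-v)\le k-1$; then $\td(G,S)\le 1+\td(G-v,S\setminus\{v\})\le 1+f(k-1,t)\le f(k,t)$ by the primary induction. The remaining case --- $G$ with at least two blocks --- is the heart of the argument and the main obstacle. Here I would first use the leaf-structure above to bound $\diam(\btree(G))$: for leaf blocks $B,B'$ at $\btree(G)$-distance $\diam(\btree(G))$, with non-cut $S$-vertices $s\in B$ and $s'\in B'$, a shortest path from $s$ to $s'$ in $G$ is an induced $S$-path that meets every cut-vertex on the $\btree(G)$-path from $B$ to $B'$, so it has at least (that number of cut-vertices)$+2$ vertices; since it has at most $t-1$ vertices, $\diam(\btree(G))\le 2(t-3)$. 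I would then locate a separator near the centre of $\btree(G)$, take $S'$ to be a set of vertices concentrated in that central region, and apply \cref{lem:apply-elimination} to obtain $X\supseteq S'$ with $\td(G,S)\le\td(G,S')+\td(G-X,S\setminus X)$, the aim being to arrange that $G$ has no induced $S'$-path on $\ge t-2$ vertices (so $\td(G,S')\le f(k,t-2)$ by induction) while the elimination producing $X$ also drives the $2$-treedepth down, so that $\tdtwo(G-X)\le k-1$ and hence $\td(G-X,S\setminus X)\le f(k-1,t)$, with one further elimination step accounting for the $+1$. The delicate part --- and where the improvement ``reduce $t$ by $2$'' genuinely originates --- is choosing this central separator and $S'$ so the bookkeeping matches $f(k,t)=f(k,t-2)+f(k-1,t)+1$ exactly, and in particular so the (a priori $\td$-optimal, not $\tdtwo$-optimal) elimination guaranteed by \cref{lem:apply-elimination} really does lower $\tdtwo$ on what remains; morally this reflects that moving from one block to a neighbouring block costs two vertices along an induced path rather than one, which halves the exponent relative to the weak bound of \cref{sec:known}.
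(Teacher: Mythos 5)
Your setup — induction on $k+t$, reduction to a connected minimal $S$-core via \cref{lem:S-core}, the observation that minimality forces every leaf block to contain a vertex of $S$ other than its cut-vertex, the base cases, and the single-block case — coincides with the paper's proof. But the multi-block case, which you yourself call ``the heart of the argument and the main obstacle,'' is left as a plan rather than an argument, and the plan as stated is both underspecified and pointed in a different direction from what is needed. You propose to bound $\diam(\btree(G))$, pick a ``separator near the centre'' of $\btree(G)$ and an $S'$ ``concentrated in that central region,'' and hope that the elimination from \cref{lem:apply-elimination} simultaneously lowers $\tdtwo$; you then concede that you do not know how to choose these objects so that the bookkeeping matches $f(k,t)=f(k,t-2)+f(k-1,t)+1$. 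That choice \emph{is} the lemma, so this is a genuine gap. (The diameter bound itself is true and appears in the paper as \cref{lem:diam}, but it plays no role in this proof; it is used only for the $P_4$- and $P_5$-free analysis.)

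The two ideas you are missing are these. First, the right choice is simply $S'=\calA(G')$, the set of \emph{all} cut-vertices of the minimal $S$-core $G'$ — no centrality is involved. The reason $G'$ has no induced $S'$-path on $t-2$ vertices is exactly your leaf-block observation applied at both ends at once: an induced path $P$ between cut-vertices $u_1,u_2$ lives in the blocks along the $u_1$--$u_2$ path of $\btree(G')$, and on the far side of each $u_i$ there is a leaf block whose non-cut-vertices meet $S$ (by minimality of the core); appending at each end an induced path from $u_i$ to such an $S$-vertex produces an induced $S$-path on at least $|V(P)|+2$ vertices, so $|V(P)|\le t-3$. This gives $\td(G',S')\le f(k,t-2)$ by the $(k,t-2)$ induction. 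Second, one should \emph{not} try to make the set $X$ from \cref{lem:apply-elimination} lower $\tdtwo$ — your worry that a $\td$-optimal elimination need not be $\tdtwo$-optimal is well founded, and it does not. Instead, since $S'=\calA(G')\subset X$, every component of $G'-X$ lies inside a single block $B$ of $G'$, so one can add to $X$ a further set $X'$ containing one vertex $v_B$ per block with $\tdtwo(B-v_B)=\tdtwo(B)-1$; each component of $G'-X$ meets $X'$ in at most one vertex, so this costs exactly $+1$ in treedepth while forcing $\tdtwo(G'-(X\cup X'))\le k-1$, after which the $(k-1,t)$ induction supplies the term $f(k-1,t)$. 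Without these two steps the recursion cannot be assembled, so the proposal as written does not prove the lemma.
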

\begin{proof}
    The proof is by induction on $k+t$.
    Let $G$ be a graph and $S\subset V(G)$.
    If $S = \emptyset$, then the statement is immediate, so assume $S \neq \emptyset$.
    Assume that $G$ has no induced $S$-path on at least $t$ vertices and that $\tdtwo(G) \leq k$.
    Without loss of generality, we can assume that $G$ is connected.
    When $k = 1$, then $G$ has no edges and clearly $\td(G,S) \leq 1$.
    When $t = 2$, then $G$ has no $S$-path, and so, each component of $G$ has at most one vertex in $S$. 
    It follows that $\td(G,S) \leq 1$.
    When $t = 3$, then $G$ has no induced $S$-path on at least $3$ vertices.
    In this case, each two vertices in $S$ are connected by an edge, as otherwise, since $G$ is connected, there is an induced $S$-path of order at least $3$ in~$G$.
    Therefore, $G[S]$ is a clique. 
    In particular, $|S| \leq \tdtwo(G) \leq k$, and so, $\td(G,S) \leq |S| \leq k$.

    Suppose that $k > 1$ and $t > 3$.
    Let $G'$ be an inclusion-wise minimal $S$-core in $G$.
    By \cref{lem:S-core}, we have $\td(G,S) = \td(G',S)$.
    Let $S'$ be the set of all cut-vertices of $G'$, i.e.\ $S' = \calA(G')$.
    We claim that $G'$ has no induced $S'$-path on at least $t-2$ vertices.
    Suppose to the contrary that $G'$ has such a path $P$ and take it inclusion-wise maximal.
    Let $u_1,u_2 \in S'$ be the endpoints of $P$.
    Let $Q$ be the unique path between $u_1$ and $u_2$ in $\btree(G')$.
    It follows that all the vertices of $P$ lie in $\bigcup_{B \in \calB(G') \cap V(Q)} V(B)$.
    Let $Q'$ be a maximal path in $\btree(G')$, which contains $Q$ as a subpath.
    The path $Q'$ contains two leaves $B_1$ and $B_2$ of $\btree(G')$ (assume that $u_i \in V(B_i)$ for each $i \in [2]$).
    Note that for each $i \in [2]$, we have $S \cap V(B_i -u_i) \neq \emptyset$ as otherwise, $\calB(G') \setminus \{B_i\}$ induced a smaller $S$-core in $G$, which contradicts the minimality of $G'$.
    Using a vertex in $S \cap V(B_1)$ and $S \cap V(B_2)$, we can extend $P$ to an induced $S$-path on at least $t$ vertices.
    This is a contradiction.
    
    Consequently, we can apply induction to $G'$, $S'$, $k$, and $t-2$.
    It follows that $\td(G',S') \leq f(k,t-2)$.
    Let $X \subset V(G')$ be a set given by \cref{lem:apply-elimination}, i.e.\ $S' \subset X \subset V(G')$ and
    \[\td(G',S) \leq \td(G',S') + \td(G' - X,S \setminus X) \leq f(k,t-2) + \td(G' - X,S \setminus X).\]
    For every $B \in \calB(G')$, let $v_B$ be such that $\tdtwo(B) = \tdtwo(B - v_B) + 1$.
    Let $X' = \{v_B : B \in \calB(G')\}$.
    Since $\calA(G') = S' \subset X$, every component of $G'-X$ contains at most one vertex of $X'$.
    This implies
    \[\td(G' - X, S \setminus X) \leq \td(G' - (X \cup X'), S \setminus (X \cup X')) + 1.\]
    Let $B \in \calB(G')$ be such that $\tdtwo(G') = \tdtwo(B)$.
    We have
    \[k \geq \tdtwo(G) \geq \tdtwo(G') = \tdtwo(B) = \tdtwo(B - v_B) + 1 \geq \tdtwo(G' - (X \cup X')).\]
    By induction applied to $G' - (X \cup X')$, $S - (X \cup X')$, $k-1$, and $t$, we have 
    \[\td(G' - (X \cup X'),S - (X \cup X')) \leq f(k-1,t).\]
    Altogether, we obtain $\td(G,S) \leq f(k,t-2) + f(k-1,t) + 1 = f(k,t)$, as desired.
    This ends the proof.
\end{proof}

\section{Treedepth of \texorpdfstring{$P_4$}{P4}-free and \texorpdfstring{$P_5$}{P5}-free graphs} \label{sec:short-paths}

We begin by studying the structure of forests of blocks of $P_4$-free and $P_5$-free graphs.
To this end, we need the following lemma.

\begin{lemma} \label{lem:diam}
    For every integer $t \geq 3$, for every connected $P_t$-free graph $G$, we have
    \[\diam(\btree(G)) \leqslant 2(t-3).\]
\end{lemma}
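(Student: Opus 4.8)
The plan is to prove the following equivalent, slightly stronger statement: \emph{if $G$ is connected and $\diam(\btree(G)) \geq 2(t-2)$, then $G$ contains an induced $P_t$}. Granting this: if $G$ is connected and $P_t$-free then $\diam(\btree(G)) \leq 2(t-2)-1$, and since $\diam(\btree(G))$ is even for connected $G$ (\cref{obs:even-diam}), in fact $\diam(\btree(G)) \leq 2(t-3)$, as desired. We may assume $t \geq 3$, so that $\ell := t-2 \geq 1$. Let $G$ be connected with $\diam(\btree(G)) \geq 2\ell$. Since $\btree(G)$ is a tree (\cref{obs:connected}), a longest path in it has length $\diam(\btree(G))$ and both its endpoints are leaves, hence lie in $\calB(G)$ (\cref{obs:leafs}); as $\btree(G)$ is bipartite with parts $\calA(G)$ and $\calB(G)$ (\cref{obs:bipartite}), an initial segment of length $2\ell$ of this path also has both endpoints in $\calB(G)$. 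Write this segment as $Q = B_0\, a_1\, B_1\, a_2\, \cdots\, a_\ell\, B_\ell$, where $B_0, \dots, B_\ell$ are distinct blocks, $a_1, \dots, a_\ell$ are distinct cut-vertices, and $a_i \in V(B_{i-1}) \cap V(B_i)$ for $1 \leq i \leq \ell$. Since $\ell \geq 1$, $G$ has a cut-vertex and hence at least three vertices; in particular $G$ has no isolated vertex, so $\abs{V(B)} \geq 2$ for every block $B$ of $G$.

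Next I would build an induced path $R$ running along $Q$. Fix $b_0 \in V(B_0) \setminus \{a_1\}$ and $b_\ell \in V(B_\ell) \setminus \{a_\ell\}$. Let $R_0$ be a shortest path in $G$ from $b_0$ to $a_1$, let $R_\ell$ be a shortest path in $G$ from $a_\ell$ to $b_\ell$, and for $1 \leq i \leq \ell-1$ let $R_i$ be a shortest path in $G$ from $a_i$ to $a_{i+1}$. I would first record the standard fact that \emph{a path in $G$ joining two vertices of a single block $B$ lies inside $B$} (such a path could leave $B$ only through a cut-vertex of $B$ and would then be unable to return). Hence $V(R_i) \subseteq V(B_i)$ for every $i$. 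Since $B_i$ and $B_{i+1}$ share only $a_{i+1}$, the paths $R_i$ and $R_{i+1}$ share only $a_{i+1}$, so the concatenation $R := R_0 R_1 \cdots R_\ell$ is a well-defined walk in $G$ from $b_0$ to $b_\ell$. Each $R_i$ has at least one edge, since its endpoints are distinct ($b_0 \neq a_1$; $a_i \neq a_{i+1}$; $a_\ell \neq b_\ell$), and the edge sets of the $R_i$ lie in pairwise distinct blocks, hence are pairwise disjoint. So once $R$ is shown to be a path, $\abs{V(R)} = \abs{E(R)} + 1 \geq \ell + 2 = t$, whence $R$ contains an induced $P_t$ and we are done.

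It remains to check that $R$ is an induced path, and this is the step I expect to be the main obstacle; I would handle it using the tree structure of $\btree(G)$. The key observation is: whenever distinct blocks $B, B'$ of $G$ share a vertex $v$, that $v$ is a cut-vertex and $B\, v\, B'$ is the unique path from $B$ to $B'$ in $\btree(G)$, so in particular $\dist_{\btree(G)}(B,B') = 2$; this is immediate since $\btree(G)$ is a tree. Moreover $\dist_{\btree(G)}(B_i, B_j) = 2\abs{i-j}$. \emph{$R$ is a path:} a vertex lying in $V(R_i) \cap V(R_j) \subseteq V(B_i) \cap V(B_j)$ with $\abs{i-j} \geq 2$ cannot exist (the blocks would be at distance $\geq 4$), and if $\abs{i-j} = 1$, say $j = i+1$, such a vertex must equal $a_{i+1}$ --- which is exactly the vertex at which $R_i$ and $R_{i+1}$ are glued --- so no vertex of $R$ is repeated. \emph{$R$ is induced:} I claim every edge $xy$ of $G$ with $x, y \in V(R)$ joins consecutive vertices of $R$. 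Choose $i \leq j$ with $x \in V(R_i)$ and $y \in V(R_j)$, and let $B$ be the block of $G$ containing $xy$. If $i = j$, then $R_i$, being a shortest path in $G$, is induced, so $xy$ is an edge of $R_i$ and $x, y$ are consecutive on $R$. If $i < j$ and $B \in \{B_i, B_j\}$, say $B = B_i$: then $y \in V(B_i) \cap V(B_j)$ forces $j = i+1$ and $y = a_{i+1}$; since $a_{i+1} \in V(R_i)$ this puts us back in the first case (and symmetrically if $B = B_j$). Finally, if $i < j$ and $B \notin \{B_i, B_j\}$: then $x$ and $y$ are distinct cut-vertices with $xB_i, xB, yB, yB_j \in E(\btree(G))$, so $B_i, x, B, y, B_j$ is a non-backtracking walk of length $4$ in $\btree(G)$, hence a path; therefore $\dist_{\btree(G)}(B_i, B_j) = 4$, so $j = i+2$, and comparing with the unique path from $B_i$ to $B_{i+2}$, namely $B_i, a_{i+1}, B_{i+1}, a_{i+2}, B_{i+2}$, we get $x = a_{i+1}$, $B = B_{i+1}$, $y = a_{i+2}$. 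Then $R_{i+1}$ is a shortest path from $a_{i+1}$ to $a_{i+2}$ and $xy = a_{i+1}a_{i+2}$ is an edge, so $R_{i+1}$ is the single edge $xy$, and again $x, y$ are consecutive on $R$. In all cases $xy$ joins consecutive vertices of $R$, so $R$ is an induced path, completing the argument.
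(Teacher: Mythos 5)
Your proof is correct and follows essentially the same route as the paper's: both take a diametral path $B_0 a_1 B_1 \cdots a_\ell B_\ell$ in $\btree(G)$, concatenate shortest paths between consecutive cut-vertices (each confined to the block they share) together with one extra vertex in each end block, and verify that the result is an induced path of order at least $\ell+2$, which bounds the diameter. The only real difference is that you verify inducedness of the concatenation in considerably more detail (via the tree metric on $\btree(G)$) than the paper, which dismisses cross-block shortcuts in a single sentence.
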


\begin{proof}
Let $t$ be an integer with $t \geq 3$ and $G$ be a connected $P_t$-free graph.
If $G$ has no edges, then the assertion is clear; thus, assume that $G$ has some edges.
Let $S$ be a path of maximum length in $\btree(G)$. 
Recall that $|V(S)|$ is odd.
If $|V(S)| = 1$, then the statement of the lemma is trivially true.
Therefore, we may assume that $|V(S)| \geqslant 3$ and let
\[S = B_1v_1B_2v_2\cdots B_mv_mB_{m+1}.\]
In particular, $v_i \in \calA(G)$ for each $i \in [m]$ and $B_i \in \calB(G)$ for each $i \in [m+1]$.
For each $i \in [m-1]$, let $P_{i+1}$ be a shortest path in $G$ connecting $v_i$ and $v_{i+1}$.
Note that $v_i$ and $v_{i+1}$ lie in the same block of $G$ (namely, $B_{i+1}$), hence, $P_{i+1}$ also lies in $B_{i+1}$.
Since $B_1 \neq B_{m+1}$ and $G$ is connected, both $B_1$ and $B_{m+1}$ have at least two vertices.
Let $v_0$ be a neighbor of $v_1$ in $B_1$ and let $v_{m+1}$ be a neighbor of $v_m$ in $B_{m+1}$.
We define $P_1 = v_0v_1$ and $P_{m+1} = v_mv_{m+1}$.
Note that the paths $P_i$ and $P_j$ for each distinct $i,j \in [m+1]$ are internally disjoint since they lie in different blocks (except possibly endpoints).
Let $P$ be the concatenation of $P_1,\dots,P_{m+1}$.
Observe that $P$ is also an induced path in $G$.
Indeed, there is no shortcut between two vertices in $P_i$ for each $i \in [m+1]$ since $P_i$ is a shortest path in $G$ and there is no shortcut between a vertex in $P_i$ and a vertex in $P_j$ for distinct $i,j \in [m+1]$ since each of the paths composing $P$ lies in a different block of $G$.
By definition, $v_0,\dots,v_{m+1}$ are a collection of distinct vertices of $P$, hence, $|V(P)| \geq m+2$.
In particular, $m+2 < t$ as $G$ is $P_t$-free.
Finally,
\[\diam(\btree(G)) = |V(S)|-1 = 2m \leq 2(t-3).\qedhere\]
\end{proof}

Let $G$ be a connected $P_5$-free graph with at least two vertices.
By \cref{lem:diam}, $\diam(\btree(G)) \leq 4$, and recall that $\diam(\btree(G))$ is even.
Therefore, there are three cases to consider.
If $\diam(\btree(G)) = 0$, then $G$ is a block.
Assume that $\diam(\btree(G)) = 2$.
It follows that $\btree(G)$ is a star.
Since all leaves of $\btree(G)$ are blocks of $G$, there is a unique cut-vertex of $G$.
Moreover, this cut-vertex is adjacent in $\btree(G)$ to all the blocks of $G$.
In particular, $G$ consists of a bunch of blocks sharing one cut-vertex.
Finally, assume that $\diam(\btree(G)) = 4$.
We claim that there is only one block in $G$ that is not a leaf in $\btree(G)$.
Indeed, otherwise, there is a path in $\btree(G)$ of length at least $6$, which is a contradiction.
In particular, $G$ consists of one \defin{central block} such that all other blocks share a cut-vertex with this central block.
See \cref{fig:P5-cases}.
We will use the above observations implicitly in the following proofs.
Note that for a $P_4$-free graph $G$, by \cref{lem:diam}, either $\diam(\btree(G)) = 0$ or $\diam(\btree(G)) = 2$.
In the next lemma, we discuss the structure of $P_4$-free and $P_5$-free graphs depending on the diameter of their forests of blocks.

\vbox{
\begin{lemma}\label{lem:p4-p5}
    Let $G$ be a connected graph.
    \begin{enumerate}
        \item If $G$ is $P_4$-free and $\diam(\btree(G)) = 2$, then the unique cut-vertex of $G$ is an apex in $G$. \label{item:p4-2}
        \item If $G$ is $P_5$-free and $\diam(\btree(G)) = 2$, then there is at most one block $B$ of $G$ such that the unique cut-vertex of $G$ is not an apex in $B$.\label{item:p5-2}
        \item If $G$ is $P_5$-free and $\diam(\btree(G)) = 4$, then the unique cut-vertex of $G$ in $V(B)$ is an apex in $B$ for every non-central block $B \in \calB(G)$.\label{item:p5-4i}
        \item If $G$ is $P_5$-free and $\diam(\btree(G)) = 4$, then $G[\calA(G)]$ is a complete graph.\label{item:p5-4ii}
    \end{enumerate}
\end{lemma}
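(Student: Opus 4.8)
The plan is to deduce all four parts from a single observation about one block, combined with the structural descriptions of the cases $\diam(\btree(G)) \in \{2,4\}$ recalled just before the lemma. Throughout I use two standard facts: each block of $G$ is an induced subgraph of $G$, and two distinct blocks meet in at most one vertex, which is then a cut-vertex; in particular, if $x \notin \calA(G)$ then $x$ lies in a unique block $B$ and $N(x) \subseteq V(B)$. The key observation (call it the \emph{$P_3$-gadget}) is: if $v$ lies in a block $B$ but is not an apex in $B$, then $B$ is neither a single vertex nor a single edge, so $B$ is $2$-connected; picking $u \in V(B)\setminus\{v\}$ with $uv \notin E(B)$ and a shortest $v$--$u$ path $v,a,b,\dots$ in $B$ (of length $\geq 2$), we obtain distinct $a,b \in V(B)\setminus\{v\}$ with $va, ab \in E(G)$ and $vb \notin E(G)$. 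In each part I assume the conclusion fails, produce one or two $P_3$-gadgets, hook them to whatever hangs off the relevant cut-vertex(es), and read off a forbidden induced $P_4$ or $P_5$; each time the only thing to check is that the non-consecutive pairs of the exhibited sequence are non-edges, which is immediate from the two facts above.

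For \ref{item:p4-2} and \ref{item:p5-2}: here $\diam(\btree(G))=2$ forces $\btree(G)$ to be a star centred at the unique cut-vertex $v$, so $G$ is the union of at least two blocks that pairwise meet exactly in $v$. If (i) fails, take a block $B_i$ in which $v$ is not an apex, a $P_3$-gadget $(a,b)$ at $(v,B_i)$, and a neighbour $w \neq v$ of $v$ in another block $B_j$; since $w,a,b \notin \calA(G)$ and $V(B_i) \cap V(B_j) = \{v\}$, the only edges inside $\{w,v,a,b\}$ are $wv, va, ab$, so $w\,v\,a\,b$ is an induced $P_4$, a contradiction. If (ii) fails, take two blocks $B_i, B_j$ in which $v$ is not an apex and $P_3$-gadgets $(a_i,b_i)$, $(a_j,b_j)$; the same check makes $b_i\,a_i\,v\,a_j\,b_j$ an induced $P_5$, a contradiction.

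For \ref{item:p5-4i} and \ref{item:p5-4ii}: here $\diam(\btree(G))=4$ gives a unique non-leaf block $B_0$, every other block being a leaf of $\btree(G)$ that meets $B_0$ in exactly one cut-vertex. I first record that every cut-vertex of $G$ lies in $V(B_0)$ (a cut-vertex adjacent in $\btree(G)$ only to leaf blocks would make $\btree(G)$ a star of diameter $2$) and that $B_0$ contains at least two cut-vertices (it is a non-leaf of $\btree(G)$). I prove \ref{item:p5-4ii} first. Let $v, v'$ be distinct cut-vertices with $vv' \notin E(G)$; pick a neighbour $u \neq v$ of $v$ in a leaf block $B_v$ and a neighbour $u' \neq v'$ of $v'$ in a leaf block $B_{v'}$, and a shortest $v$--$v'$ path $z_0 z_1 \cdots z_k$ in $B_0$, which has $k \geq 2$ because $vv' \notin E(G)$. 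Since $N(u) \subseteq V(B_v)$ and $N(u') \subseteq V(B_{v'})$, since $B_0$ is induced (so $z_i z_j \notin E(G)$ for $|i-j| \geq 2$), and since $B_v, B_{v'}$ meet $B_0$ only in $v, v'$, the sequence $u\,z_0\,z_1 \cdots z_k\,u'$ is an induced path on $k+3 \geq 5$ vertices and hence contains an induced $P_5$, a contradiction; so $G[\calA(G)]$ is complete.

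Finally \ref{item:p5-4i}: a block $B$ containing a unique cut-vertex $v$ of $G$ must be a leaf block, since $B_0$ contains at least two cut-vertices. Suppose $v$ is not an apex in $B$, take a $P_3$-gadget $(a,b)$ at $(v,B)$, pick another cut-vertex $v' \neq v$, a leaf block $B' \ni v'$, and a neighbour $c \neq v'$ of $v'$ in $B'$; by \ref{item:p5-4ii} we have $vv' \in E(G)$. Since $N(a), N(b) \subseteq V(B)$, $N(c) \subseteq V(B')$, and $B, B', B_0$ pairwise meet only in cut-vertices, the only edges inside $\{b,a,v,v',c\}$ are $ba, av, vv', v'c$, so $b\,a\,v\,v'\,c$ is an induced $P_5$, a contradiction. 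The main things to get right are the two bracketed block facts and the case descriptions — in particular noticing that \ref{item:p5-4i} is only about the leaf blocks (the central block can genuinely fail to have its cut-vertices as apices, e.g.\ a $4$-cycle with two pendant edges at adjacent vertices), so that \ref{item:p5-4ii} must be proved before \ref{item:p5-4i} and then fed into it; the remaining work is only the routine non-edge bookkeeping.
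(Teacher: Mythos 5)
Your proof is correct and follows essentially the same route as the paper's: in each case you negate the conclusion and concatenate shortest paths (or their two-edge truncations) from distinct blocks through the relevant cut-vertices to exhibit a forbidden induced $P_4$ or $P_5$, with the non-edges guaranteed by the fact that blocks meet only in cut-vertices. The only cosmetic differences are that you derive (iii) from (iv) via the edge $vv'$ where the paper instead routes through a shortest $v$--$v'$ path inside the central block, and that you make explicit the (correct) reading of (iii) as concerning only the non-central blocks, which matches how the paper applies it.
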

}
\begin{proof}
    First, assume that $G$ is $P_4$-free with $\diam(\btree(G)) = 2$ and let $v$ be the unique cut-vertex of $G$.
    Suppose to the contrary that there is $u \in V(G-v)$ not adjacent to $v$ in $G$.
    Let $B$ be the block of $G$ containing $u$.
    Since $\diam(\btree(G)) = 2$, there is $D \in \calB(G) \setminus \{B\}$ and $w \in V(D)$ adjacent to $v$ in $G$.
    Let $P$ be a shortest path from $v$ to $u$ in $B$.
    Note that $P$ has at least one internal vertex.
    It follows that $R = uPvw$ is an induced path in $G$ and $R$ has at least $4$ vertices, which is a contradiction that yields~\ref{item:p4-2}.

    Next, assume that $G$ is $P_5$-free with $\diam(\btree(G)) = 2$ and let $v$ be the unique cut-vertex of $G$.
    Suppose to the contrary that there are $B_1,B_2 \in \calB(G)$ such that $v$ is not an apex in either $B_1$ or $B_2$.
    Fix $i \in [2]$.
    Let $u_i \in V(B_i - v)$ be non-adjacent to $v$ in $G$, and let $P_i$ be a shortest path in $G$ connecting $u_i$ and $v$.
    Note that $P_i$ has at least one internal vertex.
    Observe that $R = u_1P_1vP_2u_2$ is an induced path in $G$.
    Moreover, $R$ has at least $5$ vertices, which is a contradiction that yields~\ref{item:p5-2}.

    Assume that $G$ is $P_5$-free with $\diam(\btree(G)) = 4$ and let $B \in \calB(G)$ be a non-central block.
    Additionally, let $v$ be the unique cut-vertex of $G$ in $V(B)$ and let $C$ be the central block of $G$.
    Let $BvCwD$ be a path in $\btree(G)$. 
    Suppose to the contrary that there is $u \in V(B - v)$ not adjacent to $v$ in $G$.
    Let $P$ be a shortest path in $G$ connecting $u$ and $v$.
    Note that $P$ has at least one internal vertex.
    Let $Q$ be a shortest path in $G$ connecting $v$ and $w$, and let $x \in V(D-w)$ be adjacent to $w$.
    Observe that $R = uPvQwx$ is an induced path in $G$.
    Moreover, since $v \neq w$, $R$ has at least $5$ vertices, which is a contradiction that yields~\ref{item:p5-4i}.

    Finally, assume that $G$ is $P_5$-free with $\diam(\btree(G)) = 4$ and let $C$ be the central block of $G$.
    Suppose to the contrary that there are non-adjacent $u_1,u_2 \in \calA(G)$.
    Note that $u_1,u_2 \in V(C)$.
    Let $P$ be a shortest path in $C$ connecting $u_1$ and $u_2$.
    Note that $P$ has at least one internal vertex.
    For each $i \in [2]$, let $v_i$ be a neighbor of $u_i$ that does not lie in $C$.
    Observe that $R = v_1u_1Pu_2v_2$ is an induced path in $G$.
    Moreover, $R$ has at least $5$ vertices, which is a contradiction that yields~\ref{item:p5-4ii}.
\end{proof}

\begin{figure}[tp]
    \centering
    \includegraphics{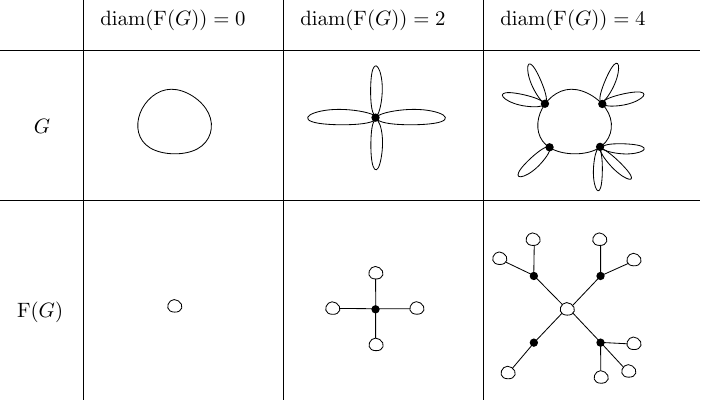}
    \caption{Examples of block structure of $P_5$-free graphs and their forests of blocks depending on $\diam(\btree(G))$.}
    \label{fig:P5-cases}
\end{figure}

\begin{proof}[Proof of \cref{thm:p5free-upper}]
    First, we prove~\ref{item:p4}.
    The proof is by induction on $|V(G)|$.
    If $G$ is the null graph, then the statement holds.
    If $G$ has more than one component, then $\td(G) = \td(C)$ for some component $C$ of $G$, and since $|V(C)| < |V(G)|$, by the induction hypothesis, $\td(G) = \td(C) \leq \tdtwo(C) \leq \tdtwo(G)$.
    Thus, we assume that $G$ is connected.
    Recall that $\diam(\btree(G)) \in \{0,2\}$ as $G$ is $P_4$-free.
    If $\diam(\btree(G)) = 0$, then $G$ is a block.
    In this case, there exists $v \in V(G)$ such that $\tdtwo(G) = \tdtwo(G - v) + 1$, and so, by the induction hypothesis, $\td(G) \leq \td(G-v) + 1 = \tdtwo(G-v)+1 = \tdtwo(G)$.
    If $\diam(\btree(G)) = 2$, then $G$ has a unique cut-vertex, which, by \cref{lem:p4-p5}.\ref{item:p4-2}, is an apex in $G$.
    By \cref{lem:apex-td2} and the induction hypothesis, $\td(G) = \td(G-v)+1 \leq \tdtwo(G-v)+1 = \tdtwo(G)$.
    This completes the proof of~\ref{item:p4}.

    Next, we prove~\ref{item:p5} by induction on $\tdtwo(G) + |V(G)|$.
    If $G$ is the null graph, then the statement holds.
    If $\tdtwo(G) = 1$, then $G$ has no edges, and thus $\td(G) \leqslant 1 = \binom{\tdtwo(G)+1}{2}$. 
    Thus, assume ${\tdtwo(G) \geqslant 2}$. 
    If $G$ has more than one component, then $\td(G) = \td(C)$ for some component $C$ of $G$.
    Since $|V(C)| < |V(G)|$, by the induction hypothesis, 
    \[\td(G) = \td(C) \leq \binom{\tdtwo(C)+1}{2} \leq \binom{\tdtwo(G)+1}{2} .\]
    Therefore, we may assume that $G$ is connected. 
    Recall that $\diam(\btree(G)) \in \{0,2,4\}$.

    First, suppose that $\diam(\btree(G)) = 0$.
    It follows that $G$ is a block.
    There exists $v \in V(G)$ such that $\tdtwo(G) = \tdtwo(G - v) + 1$. 
    By the induction hypothesis, 
    \[\td(G) \leq \td(G-v) + 1 \leq \binom{\tdtwo(G - v)+1}{2}+1 \leq \binom{\tdtwo(G - v)+2}{2} = \binom{\tdtwo(G)+1}{2}.\]

    Next, suppose that $\diam(\btree(G)) = 2$.
    Let $v$ be the unique cut-vertex of $G$.
    By \cref{lem:p4-p5}.\ref{item:p5-2}, $v$ is an apex in all but at most one block of $G$. 
    Let $D$ be such that $v$ is an apex in every $B \in \calB(G) \setminus \{D\}$.
    Let $u \in V(D)$ such that $\tdtwo(D) = \tdtwo(D - u) + 1$. 
    Applying the induction hypothesis to $G - u - v$, we obtain $\td(G - u - v) \leqslant \binom{\tdtwo(G - u - v)+1}{2}$. 
    Note that $\tdtwo(G - u - v) \leqslant \tdtwo(G) - 1$ by \cref{lem:apex-td2} and the choice of $u$. 
    This gives us
    \[
    \td(G) \leqslant \td(G - u - v) + 2 \leqslant \binom{\tdtwo(G - u - v) + 1}{2} + 2 \leqslant \binom{\tdtwo(G)}{2} + 2 \leqslant \binom{\tdtwo(G) + 1}{2}.
    \]

    Finally, suppose that $\diam(\btree(G)) = 4$. 
    Let $C \in \calB(G)$ be the central block of $G$ and let $v \in V(C)$ be such that $\tdtwo(C) = \tdtwo(C-v) + 1$. 
    Let $A = \calA(G) \cup \{v\}$.
    For every $B \in \calB(G) \setminus \{C\}$, let $v_B$ be the unique cut-vertex of $G$ in $V(B)$.
    Observe that for every $B \in \calB(G) \setminus \{C\}$, by \cref{lem:p4-p5}.\ref{item:p5-4i}, $v_B$ is an apex in $B$, and so, by \cref{lem:apex-td2}, $\tdtwo(B) = \tdtwo(B-v_B)+1$.
    By the induction hypothesis applied to $C-v$ and $B-v_B$ for each $B \in \calB(G) \setminus \{C\}$,
    \begin{align*}
        \td(G) \leq \td(G - A) + |A| & \leqslant \max\left(\td(C - v), \max_{B \in \calB(G) \setminus \{C\}} \td(B - v_B)\right) + |A| \\
        &\leqslant \max\left(\binom{\tdtwo(C - v)+1}{2}, \max_{B \in \calB(G) \setminus \{C\}} \binom{\tdtwo(B - v_B)+1}{2}\right) + |A| \\
        & \leqslant \max_{B \in \calB(G)} \binom{\tdtwo(B)}{2} + |A| \leqslant \binom{\tdtwo(G)}{2} + |A|.
    \end{align*}
    To conclude, it suffices to prove that $|A| \leqslant \tdtwo(C) \leq \tdtwo(G)$. 
    Note that $\calA(G) \subset V(C)$.
    By \cref{lem:p4-p5}.\ref{item:p5-4ii}, $\tdtwo(G[\calA(G)]) = |\calA(G)|$. 
    If $v \in \calA(G)$, then $A = \calA(G)$ and thus indeed $|A| = \tdtwo(G[\calA(G)]) \leqslant \tdtwo(C)$. 
    On the other hand, if $v \notin \calA(G)$, then $\calA(G) \subseteq V(C - v)$, and thus $|\calA(G)| = \tdtwo(G[\calA(G)]) \leqslant \tdtwo(C - v) = \tdtwo(C) - 1$, which gives us $|A| \leqslant \tdtwo(C)$. 
\end{proof}

\section{A lower bound for \texorpdfstring{\cref{thm:main}}{main theorem}} \label{sec:lower-bound}

Grohe et al.~\cite{Grohe15} proved that the bound in \cref{wcol:bd:tw} is sharp.
Their construction appeared later to be of great importance to the topic of weak colouring numbers~\cite{wcols}.
We simplify and adjust the construction slightly to serve as a lower bound for \cref{thm:main}.

We construct recursively graphs $G_{r,k}$ for all positive integers $r$ and $k$.
First, for all positive integers $r$ and $k$, we set $G_{r,1}$ to be a one-vertex graph and $G_{1,k}$ to be a complete graph on $k$ vertices.
Now, given integers $r \geq 2$ and $k \geq 2$, and having defined $G_{r-1,k}$ and $G_{r,k-1}$, we do the following.
The graph $G_{r,k}$ is obtained from $G_{r-1,k}$ by adding for each vertex $v \in V(G_{r-1,k})$, a copy of $G_{r,k-1}$ and connecting $v$ to every vertex in the copy.
See \cref{fig:p5construction} for an example.

Given integers $t \geq 3$ and  $k \geq 1$, we claim that the graph $G_{r,k}$ with $r = \lfloor (t-1) \slash 2 \rfloor$ witnesses \cref{thm:lower-bound}.
To this end, we prove the following statement.

\begin{lemma}\label{lem:lower-bound-technical}
    For all positive integers $r$ and $k$, we have
    \begin{enumerate}
        \item $\tdtwo(G_{r,k}) \leq k$, \label{item:tdtwo}
        \item $G_{r,k}$ is $P_{2r+1}$-free, \label{item:p-free}
        \item $\td(G_{r,k}) \geq \binom{r+k-1}{r}$. \label{item:td}
    \end{enumerate}
\end{lemma}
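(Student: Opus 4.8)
The plan is to prove all three items together by induction on $r+k$. The base cases $k=1$ (where $G_{r,1}=K_1$) and $r=1$ (where $G_{1,k}=K_k$) are immediate: $K_1$ is $P_{2r+1}$-free with $\tdtwo(K_1)=\td(K_1)=1=\binom{r}{r}$, and $K_k$ is $P_3$-free with $\tdtwo(K_k)=\td(K_k)=k=\binom{k}{1}$. For the inductive step fix $r,k\ge 2$, write $H=G_{r-1,k}$ with $V(H)=\{u_1,\dots,u_m\}$, and for each $i$ let $K_i$ be the copy of $G_{r,k-1}$ attached to $u_i$. A routine induction shows every $G_{r,k}$ is connected; hence $G[V(K_i)\cup\{u_i\}]$ (which is $u_i$ joined to all of the connected graph $K_i$) is $2$-connected, so it is one block of $G_{r,k}$, and together with the blocks of $H$ these are exactly the blocks of $G_{r,k}$. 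Moreover $u_i$ is an apex of that block and the only cut-vertex of $G_{r,k}$ separating $V(K_i)$ from the rest. I use these facts freely.

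Item (i): since $\tdtwo$ of a connected graph is the maximum of $\tdtwo$ over its blocks, it suffices to bound $\tdtwo$ on each block. Blocks of $H$ have $\tdtwo\le\tdtwo(H)\le k$ by induction; for the block $G[V(K_i)\cup\{u_i\}]$, \cref{lem:apex-td2} applied to the apex $u_i$ gives $\tdtwo=\tdtwo(G_{r,k-1})+1\le(k-1)+1=k$ by induction. Item (ii): suppose $P$ is an induced path on $2r+1$ vertices in $G_{r,k}$. If $V(P)\subseteq V(K_i)$ for some $i$, we contradict the inductive claim that $G_{r,k-1}$ is $P_{2r+1}$-free. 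Otherwise, since distinct copies are non-adjacent and each $K_i$ meets the rest of $G_{r,k}$ only at $u_i$, which is adjacent to all of $K_i$, one checks that a maximal subpath of $P$ whose vertices lie in $\bigcup_i V(K_i)$ is necessarily contained in a single $K_i$ and must be a single vertex at an end of $P$ (an internal such run, or an end run of length $\ge 2$, would force the cut-vertex $u_i$ to be a chord of $P$). Thus $P$ meets $\bigcup_i V(K_i)$ in at most its two endpoints, so it contains an induced subpath on $\ge(2r+1)-2=2r-1$ vertices lying entirely in $V(H)$, i.e.\ an induced $P_{2r-1}$ in $H=G_{r-1,k}$ — contradicting the inductive claim (a $P_s$-free graph is $P_{s'}$-free for all $s'\ge s$).

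Item (iii) is the crux; the plan is to establish the recursion
\[\td(G_{r,k})\ \ge\ \td(G_{r-1,k})+\td(G_{r,k-1}),\]
after which (iii) follows by induction, because $\binom{r+k-1}{r}=\binom{r+k-2}{r-1}+\binom{r+k-2}{r}$ obeys the same recursion with the same base values. Write $a=\td(H)$ and $b=\td(G_{r,k-1})$, and let $F$ be an elimination forest of $G:=G_{r,k}$ of depth $D=\td(G)$. Restricting $F$ to $V(H)$ gives an elimination forest of $H$, which therefore has depth $\ge a$; choose a vertex $w=u_j$ of depth $\ge a$ in it, so $w$ has at least $a-1$ $F$-ancestors lying in $V(H)$. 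Let $K=K_j$. As $w$ is adjacent in $G$ to every vertex of $K$, each vertex of $K$ is $F$-comparable with $w$; let $X\subseteq V(K)$ be those that are ancestors of $w$ and $Y=V(K)\setminus X$ those that are descendants. The $a-1$ ancestors of $w$ in $V(H)$ together with $X$ (disjoint from $V(H)$) give $\ge(a-1)+|X|$ distinct proper ancestors of $w$, so $\mathrm{depth}_F(w)\ge a+|X|$. On the other hand $F$ restricted to $Y$ is an elimination forest of $G[Y]\cong G_{r,k-1}-X$, and it lies strictly below $w$, so $D-\mathrm{depth}_F(w)\ge\td(G_{r,k-1}-X)\ge b-|X|$. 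Adding the two displayed inequalities gives $D\ge a+b$.

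The main obstacle is item (iii): lower-bounding the treedepth of a construction defined via a minimum. The device that makes the argument go through cleanly — with no case analysis on where $F$ places the top of a copy — is to quantify how much of the attached copy $K_j$ floats above its root vertex $w$ (this is $|X|$) and to observe that the two effects cancel exactly: a larger $X$ pushes $w$ deeper in $F$, while a smaller $X$ leaves more of the treedepth of $G_{r,k-1}$ to be realised strictly below $w$.
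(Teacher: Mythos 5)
Your proof is correct. Items~(i) and~(ii) follow essentially the same route as the paper: identifying the blocks of $G_{r,k}$ as the blocks of $G_{r-1,k}$ together with the apexed copies of $G_{r,k-1}$, and showing that an induced path meets $\bigcup_i V(K_i)$ in at most its two endpoints. For item~(iii) you prove the same key recursion $\td(G_{r,k})\ge\td(G_{r-1,k})+\td(G_{r,k-1})$, but by a genuinely different argument. The paper isolates an abstract claim about graphs of type $a(H,H')$ and proves it by induction on $\td(H)+\abs{V(H)}$, working only with the recursive vertex-deletion definition of treedepth; you instead take an optimal elimination forest $F$ and run a direct charging argument, splitting the copy $K_j$ attached to a deep vertex $w$ of $H$ into the ancestors $X$ and descendants $Y$ of $w$ and observing that the contribution of $\abs{X}$ to $\depth_F(w)$ exactly compensates the loss $\td(G_{r,k-1}-X)\ge b-\abs{X}$ below $w$. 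Your version avoids the induction on the host graph entirely and makes the cancellation mechanism explicit, which is a nice gain in transparency; the trade-off is that it relies on the equivalence between treedepth and minimum-depth elimination forests (plus the facts that adjacent vertices are comparable in such a forest and that restricting to an induced subgraph yields an elimination forest of it), none of which is set up in the paper, whereas the paper's argument stays entirely within the recursive definition used throughout. If you adopt your route you should state that folklore equivalence, and also note the harmless edge case $Y=\emptyset$, where the inequality $D-\depth_F(w)\ge b-\abs{X}$ holds because the right-hand side is nonpositive.
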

\begin{proof}
Item~\ref{item:tdtwo} follows from a simple induction on $r+k$.
The base cases of the construction satisfy the assertion.
Next, note that when $r,k \geq 2$, then the blocks of $G_{r,k}$ are either blocks of $G_{r-1,k}$ or $G_{r,k-1}$ with an additional vertex adjacent to all the other vertices.
By induction, $\tdtwo(G_{r-1,k}) \leq k$.
By the definition of $2$-treedepth, the $2$-treedepth of the other blocks is at most $\tdtwo(G_{r,k-1}) + 1$, which is at most $k$ by induction.
This way, we obtain~\ref{item:tdtwo}.

Item~\ref{item:p-free} again follows from induction on $r+k$.
Again, the base cases of the construction satisfy the assertion.
Suppose that $r,k \geq 2$.
Let $P$ be a longest induced path in $G_{r,k}$.
If $P$ is contained in a copy of $G_{r,k-1}$, then by induction $|V(P)| < 2r+1$.
Otherwise, $P$ intersects $G_{r-1,k}$.
Let $X$ be the set of vertices in this intersection.
By construction, $P[X]$ is connected.
Therefore, by induction, $|V(P[X])| < 2r-1$.
Now, since the vertices of $G_{r-1,k}$ are adjacent to all vertices in respective copies of $G_{r,k-1}$, $P$ can have only two vertices outside of $G_{r-1,k}$.
We conclude that $|V(P)| < 2r+1$.

Finally, to prove~\ref{item:td}, it is handy to have the following abstract claim.

\begin{claim}\label{claim:type-a}
    Let $H$ and $H'$ be connected graphs.
    A graph $G$ is of type $a(H,H')$ if it is obtained from $H$ by adding, for every vertex $v$ of $H$, a copy $H_v$ of $H'$ such that every vertex of the copy is adjacent to $v$.
    For every graph $G$ of type $a(H,H')$, we have
    \[\td(G) \geq \td(H) + \td(H').\]
\end{claim}
\begin{proofclaim}
    We proceed by induction on $\td(H) + \abs{V(H)}$.
    If $\td(H) = 1$, then $H$ is a single vertex, and a graph $G$ of type $a(H,H')$ consists of a copy of $H'$ and an apex.
    Thus, by \cref{lem:apex-td2}, $\td(G) = 1 + \td(H')$.
    Now, assume that $\td(H) \geq 2$ and let $G$ be of type $a(H,H')$.
    Let $u \in V(G)$ be such that $\td(G) = \td(G-u) + 1$.
    Let $v \in V(H)$ be such that $u \in V(H_v) \cup \set{v}$.
    In particular, $\td(H-v) \geq \td(H) - 1$.
    Note that $G - \set{u, v}$ has a component $C$ of type $a(K,H')$ where $K$ is a component of $H-v$ with $\td(K) = \td(H-v)$.
    By induction applied to $C$, we obtain,
    \begin{align*}
        \td(G) = \td(G - u) + 1 &\geq \td(C) + 1 \geq \td(H-v) + \td(H') + 1 \geq \td(H) + \td(H').
    \end{align*}
    This completes the proof of the claim.
\end{proofclaim}

Note that for integers $r \geq 2$ and $k \geq 2$, the graph $G_{r,k}$ is of type $a(G_{r-1,k},G_{r,k-1})$.
Therefore, by \cref{claim:type-a}, 
\[\td(G_{r,k}) \geq \td(G_{r-1,k}) + \td(G_{r,k-1}).\]
Additionally, for all positive integers $r$ and $k$, we have $\td(G_{r,1}) = 1 = \binom{r+1-1}{r}$ and $\td(G_{1,k}) = k = \binom{1+k-1}{1}$.
Now, a simple inductive computation shows that $\td(G_{r,k}) \geq \binom{r+k-1}{r}$ for all positive integers $r$ and $k$, and so,~\ref{item:td} holds.
\end{proof}

\begin{figure}[tp]
    \centering
    \includegraphics{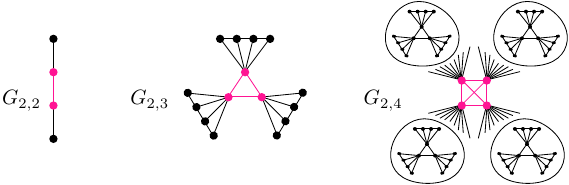}
    \caption{First steps of the construction of graphs $G_{2,k}$. Note that these graphs witness the tightness of the bound on $g(5,k)$ in \cref{thm:p5free-upper}. Pink vertices are the vertices of $G_{1,k}$, i.e.\ complete graphs on $k$ vertices.}
    \label{fig:p5construction}
\end{figure}

\section{Forcing long induced paths}\label{sec:forcing:proofs}

First, we define pathwidth formally.
The definition that we give is not the most standard one.
An \defin{interval graph} is an intersection graph of finite intervals of the real line.
We assume that interval graphs are given with their interval representations.
The \defin{pathwidth} of a graph $G$ is the minimum non-negative integer $p$ such that there exists an interval graph $I$ of clique number $p+1$ and $G$ is a subgraph of $I$.
In particular, the pathwidth of an interval graph is its clique number minus one.

\subsection{Construction}

In this subsection, we give a construction of graphs with small pathwidth and $2$-treedepth, long paths as subgraphs but no very long paths as induced subgraphs.
More precisely, we prove the following statement.

\begin{theorem}\label{thm:pwtd2-lower}
    For all positive integers $\ell$ and $k$, there exists a graph $G_{\ell, k}$ such that 
    \begin{enumerateAlpha}
        \item $G_{\ell, k}$ contains a path of order $\binom{\ell + k - 1}{k}$\textup{;}
        \item $\pw(G_{\ell, k}) \leq k$ and $\tdtwo(G_{\ell, k}) \leq k + 1$\textup{;}
        \item $G_{\ell, k}$ is $P_{\ell+1}$-free.
    \end{enumerateAlpha}
\end{theorem}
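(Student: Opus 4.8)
The plan is to define $G_{\ell,k}$ recursively, with the recursion driven by Pascal's identity $\binom{\ell+k-1}{k}=\binom{(\ell-1)+k-1}{k}+\binom{\ell+(k-1)-1}{k-1}$. I take $G_{1,k}=K_1$ for every $k$ and $G_{\ell,1}=P_\ell$ for every $\ell$, fixing in each case an endpoint of the path as the end of a \emph{designated path}; every graph we build will be connected. For $\ell,k\ge 2$, given $G_{\ell-1,k}$ and $G_{\ell,k-1}$ with designated paths of orders $\binom{\ell+k-2}{k}$ and $\binom{\ell+k-2}{k-1}$, I form $G_{\ell,k}$ from the disjoint union of $G_{\ell-1,k}$ with a fresh copy $H$ of $G_{\ell,k-1}$ by joining the last vertex $q$ of the designated path of $G_{\ell-1,k}$ to every vertex of $H$; thus $q$ is an apex of $H$ in $G_{\ell,k}$. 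Since $q$ is adjacent to the first vertex of the designated path of $H$, concatenating the two designated paths yields a Hamiltonian path of $G_{\ell,k}$ of order $\binom{\ell+k-2}{k}+\binom{\ell+k-2}{k-1}=\binom{\ell+k-1}{k}$, which I take as the designated path of $G_{\ell,k}$; this proves (a). (This is a ``linearised'' variant of the $G_{r,k}$ construction behind \cref{thm:lower-bound}, where only the single vertex $q$ receives the attached copy at each step; one checks easily that $G_{2,k}=K_{k+1}$.)

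For (b), I would carry along the stronger invariant that $G_{\ell,k}$ admits a path decomposition of width at most $k$ in which the two endpoints of the designated path occur in the first and the last bag. The base cases are clear. In the inductive step, concatenate a path decomposition of $G_{\ell-1,k}$ of width at most $k$ having $q$ in its last bag with the path decomposition of $H$ of width at most $k-1$ after inserting $q$ into each of its bags: the width remains at most $k$, the new edges at $q$ are covered (since $q$ lies in every $H$-bag), the occurrence of $q$ stays an interval, and the designated-path endpoints of $G_{\ell,k}$ land in the first and last bags. Hence $\pw(G_{\ell,k})\le k$. For the $2$-treedepth bound I would use \cref{lem:apex-td2}: when $\ell\ge 3$ the vertex $q$ is a cut-vertex of $G_{\ell,k}$ and the blocks of $G_{\ell,k}$ are those of $G_{\ell-1,k}$ together with the single block $H+q$ (it has no cut-vertex because $q$ is an apex of the connected graph $H$), so $\tdtwo(G_{\ell,k})\le\max\bigl(\tdtwo(G_{\ell-1,k}),\,\tdtwo(H)+1\bigr)\le k+1$ by induction; when $\ell\le 2$ we have $G_{2,k}=H+q$ with $q$ an apex of $H$, and \cref{lem:apex-td2} gives $\tdtwo(G_{2,k})=\tdtwo(G_{2,k-1})+1\le k+1$.

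The heart of the argument, and the reason $q$ is joined to \emph{all} of $H$, is (c): $G_{\ell,k}$ is $P_{\ell+1}$-free. I would argue by induction on $\ell+k$, taking an induced path $P$ in $G_{\ell,k}=G_{\ell-1,k}\cup(H+q)$ and splitting on how $P$ meets the cut-vertex $q$. If $P$ avoids $V(G_{\ell-1,k})\setminus\{q\}$ then $P\subseteq H+q$; if moreover $q\in V(P)$, then since $q$ dominates $H$ the path $P$ has at most one vertex of $H$ on either side of $q$, so $\abs{V(P)}\le 3\le\ell$ (the case $\ell=2$ needs no estimate here, as $G_{2,k}$ is complete), while if $q\notin V(P)$ then $P\subseteq H=G_{\ell,k-1}$ has at most $\ell$ vertices by induction. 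Otherwise $P$ meets $V(G_{\ell-1,k})\setminus\{q\}$, hence passes through $q$ and decomposes as $P_1\,q\,P_2$ with $\emptyset\ne V(P_1)\subseteq V(G_{\ell-1,k})\setminus\{q\}$ and $V(P_2)\subseteq V(H)$; since $q$ dominates $H$ we get $\abs{V(P_2)}\le 1$, and since $P_1q$ is an induced path in the $P_\ell$-free graph $G_{\ell-1,k}$ we get $\abs{V(P_1)}\le\ell-2$, so $\abs{V(P)}\le(\ell-2)+1+1=\ell$. In all cases $\abs{V(P)}\le\ell$, which is (c). I expect the main obstacle to be precisely this last computation: the attachment must be chosen so that the ``$+1$''s in the pathwidth and treedepth bounds and the ``$-2$'' in the induced-path count line up, and the base cases $\ell\in\{1,2\}$ and $k=1$ (together with the structural fact $G_{2,k}=K_{k+1}$) must be verified by hand.
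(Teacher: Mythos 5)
Your construction is, after unrolling one step of the recursion, exactly the paper's graph $G_{\ell,k}$ (the paper attaches copies of $G_{1,k},\dots,G_{\ell,k}$ in a chain along their roots; your one-step Pascal recursion $G_{\ell,k}=G_{\ell-1,k}\cup(H+q)$ with $q$ the designated endpoint yields the same graphs with the same attachment vertex), and each part is proved by essentially the same means as the paper: concatenated Hamiltonian paths for (a), an interval/path-decomposition argument plus the apex lemma applied to the blocks for (b), and domination by the cut-vertex to control induced paths for (c). The only slip is in (c): from ``$P$ meets $V(G_{\ell-1,k})\setminus\{q\}$'' you infer ``hence passes through $q$'', which silently omits the sub-case $V(P)\subseteq V(G_{\ell-1,k})$; that case is immediate since $G_{\ell-1,k}$ is $P_{\ell}$-free by induction, so this is a one-line patch rather than a real gap.
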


Assume that \cref{thm:pwtd2-lower} holds.
We argue that \cref{thm:pw,thm:td2-lower} follow.
Note that $\binom{\ell + k - 1}{k} = \frac{(\ell + k - 1) \dotsm (\ell + 1) \ell}{k!} \geq \ell^k/k!$ for all positive integers $\ell$ and $k$.

\begin{proof}[Proof of \cref{thm:pw}]
    For a given positive integer $k$, we set $c_k = k!$.
    Now, given a positive integer $n$, we set $\ell$ to be such that $\binom{\ell+k-1}{k} \geq n > \binom{\ell+k-2}{k}$.
    By \cref{thm:pwtd2-lower}, $G_{\ell,k}$ contains $P_n$ as a subgraph, $\pw(G_{\ell,k}) \leq k$, and $G_{\ell,k}$ is $P_{\ell+1}$-free.
    Since $n > \binom{\ell+k-2}{k} \geq \frac{(\ell-1)^k}{k!}$, we obtain $k! \cdot n^{1 \slash k} \geq \ell$.
    This shows that $G_{\ell,k}$ witnesses \cref{thm:pw} for $k$ and $n$ and ends the proof.
\end{proof}

\begin{proof}[Proof of \cref{thm:td2-lower}]
    For a given positive integer $k$, we set $c_k = (k-1)!$.
    Now, given a positive integer $n$, we set $\ell$ to be such that $\binom{\ell+k-2}{k-1} \geq n > \binom{\ell+k-3}{k-1}$.
    By \cref{thm:pwtd2-lower}, $G_{\ell,k-1}$ contains $P_n$ as a subgraph, $\tdtwo(G_{\ell,k-1}) \leq k$, and $G_{\ell,k-1}$ is $P_{\ell+1}$-free.
    Since $n > \binom{\ell+k-3}{k-1} \geq \frac{(\ell-1)^{k-1}}{(k-1)!}$, we obtain $(k-1)! \cdot n^{1 \slash (k-1)} \geq \ell$.
    This shows that $G_{\ell,k-1}$ witnesses \cref{thm:td2-lower} for $k$ and $n$ and ends the proof.
\end{proof}

\begin{proof}[Proof of \cref{thm:pwtd2-lower}]
    We will iteratively define the graphs $G_{\ell, k}$ as well as a special root vertex $r_{\ell, k} \in V(G_{\ell,k})$. 
    For every positive integer $\ell$, $G_{\ell,1}$ is a path of order $\ell$ and $r_{\ell, 1}$ is one of its endpoints. 
    Having defined $G_{\ell, k}$ and $r_{\ell, k}$ for some fixed positive integer $k$ and all positive integers $\ell$, we define $G_{\ell,k+1}$ and $r_{\ell, k + 1}$ for a positive integer $\ell$ as follows.
    Take the disjoint union of the graphs $G_{1, k}$, $G_{2, k}$, \ldots, $G_{\ell, k}$, and for every $s \in [\ell-1]$, add an edge between $r_{s,k}$ and every vertex of $G_{s+1,k}$.
    Finally, we set as $r_{\ell,k+1}$, the vertex $r_{\ell,k}$ from the copy of $G_{\ell,k}$ that we used.
    We remark in passing that $G_{2, k}$ is a complete graph on $k + 1$ vertices.
    Two examples are shown in \cref{fig:Glkconstruction}.

    \begin{figure}[tp]
    \centering
        \begin{subfigure}[b]{0.49\textwidth}
            \centering
            \begin{tikzpicture}
                \pgfmathtruncatemacro\n{4}
                \foreach \i in {1, ..., \n}{
                    \pgfmathtruncatemacro{\in}{\i - 1}
                    \foreach \j in {1, ..., \i}{
                        \tkzDefPoint(1.5*\i,\j){p_{\i,\j}}
                        \ifthenelse{\i > 1}{
                            \tkzDrawSegment[lightgray](p_{\in,1},p_{\i,\j})
                        }
                        {}
                    }
                }
        
                \foreach \i in {1, 2, ..., \n}{
                    \tkzLabelPoint[below](p_{\i,1}){$r_{\i,1}$}
                    \foreach \j in {1, ..., \i}{
                        \tkzDrawSegment(p_{\i,1},p_{\i,\i})
                        \tkzDrawPoint(p_{\i,\j})
                    }
                }
            \end{tikzpicture}
            \caption{$G_{4, 2}$}
        \end{subfigure}%
        \begin{subfigure}[b]{0.49\textwidth}
            \centering
            \begin{tikzpicture}
                \pgfmathtruncatemacro\n{3}
                \foreach \i in {1, ..., \n}{
                    \pgfmathtruncatemacro{\in}{\i - 1}
                    \foreach \j in {1,...,\i}{
                        \foreach \k in {1, ..., \j}{
                            \tkzDefPoint(0.5*\i*\i+\j,-0.5*\i+\k){p_{\i,\j,\k}}
                            \ifthenelse{\i > 1}{
                                \tkzDrawSegment[lightgray](p_{\in,\in,1},p_{\i,\j,\k})
                            }
                            {}
                        }
                    }
                }

                \foreach \i in {1,...,\n}{
                    \tkzLabelPoint[below](p_{\i,\i,1}){$r_{\i,2}$}
                    \foreach \j in {1,...,\i}{
                        \pgfmathtruncatemacro{\jn}{\j - 1}
                        \foreach \k in {1,...,\j}{
                            \tkzDrawPoint(p_{\i,\j,\k})
                            \ifthenelse{\j > 1}{
                                \tkzDrawSegment(p_{\i,\jn,1},p_{\i,\j,\k})
                            }
                            {}
                        }
                        \tkzDrawSegment(p_{\i,\j,1},p_{\i,\j,\j})
                    }
                }
            \end{tikzpicture}
            \caption{$G_{3, 3}$}
        \end{subfigure}
        \caption{}
        \label{fig:Glkconstruction}
    \end{figure}

    In order to prove the theorem, we prove a stronger statement by induction on $k$.
    For every positive integer $\ell$,
    \begin{enumerateAlphaPrim}
        \item $G_{\ell, k}$ has $\binom{\ell + k - 1}{k}$ vertices and contains a Hamiltonian path that ends at $r_{\ell, k}$; \label{ind:Hamiltonian}
        \item $\pw(G_{\ell, k}) \leq k$ and $\tdtwo(G_{\ell, k}) \leq k + 1$;\label{ind:pw}
        \item $G_{\ell,k}$ is $P_{\ell+1}$-free;\label{ind:free}
        \item $G_{\ell, k}$ is an interval graph of clique number at most $k+1$ where the interval corresponding to $r_{\ell, k}$ has the rightmost right endpoint.\label{ind:interval}
    \end{enumerateAlphaPrim}

    Note that \ref{ind:Hamiltonian}-\ref{ind:interval} all hold for $\ell = 1$ or $k = 1$.
    Suppose they hold for some fixed value of $k$ (and all values of $\ell$). 
    We will now prove \ref{ind:Hamiltonian}-\ref{ind:interval} for $k + 1$. Let $\ell \geq 2$.
    
    First, the number of vertices in $G_{\ell, k + 1}$ is the sum of the number of vertices in $G_{1, k}$, \ldots, $G_{\ell, k}$ which is
    \[
        \sum_{s = 1}^{\ell} \binom{s + k - 1}{k} = \binom{\ell + k}{k + 1}.
    \]
    Additionally, for each $s \in [\ell]$, $G_{s, k}$ has a Hamiltonian path $P_s$ that ends at $r_{s, k}$. 
    It follows that $P_1 P_2 \dots P_\ell$ is a Hamiltonian path of $G_{\ell, k + 1}$ that ends at $r_{\ell, k + 1}$, which gives~\ref{ind:Hamiltonian}.

    Next, we prove~\ref{ind:interval}.
    For each $s \in [\ell]$, $G_{s, k}$ is an interval graph, say, represented by a collection of intervals $\calR_s$ where the interval corresponding to $r_{s, k}$ has the rightmost right endpoint. 
    Place the collections $\calR_1$, \ldots, $\calR_\ell$ in that order on the real line so that intervals from different collections are disjoint. 
    This is a realisation of the disjoint union of $G_{1, k}$, \dots, $G_{\ell, k}$ as an interval graph. 
    For each $s \in [\ell-1]$, take the interval corresponding to $r_{s, k}$ and extend it to the right so that it intersects every interval in $\calR_{s + 1}$ but no interval in $\calR_{s + 2}$.
    Additionally, extend the interval corresponding to $r_{\ell, k}$ so that its right endpoint is rightmost. 
    Since the interval corresponding to $r_{s, k}$ has the rightmost right endpoint within $\calR_s$, extending this interval only adds the edges between $r_{s, k}$ and each vertex of $G_{s + 1, k}$.
    Therefore, $G_{\ell, k + 1}$ is an interval graph where the interval corresponding to $r_{\ell, k+1}$ has the rightmost right endpoint.
    Moreover, by construction,  $G_{\ell, k + 1}$ has the clique number one greater than the maximum of the clique number of $G_{1, k}$, \dots, $G_{\ell, k}$.
    Thus, applying induction,~\ref{ind:interval} holds.
    
    The fact that $\pw(G_{\ell,k+1}) \leq k+1$ follows from~\ref{ind:interval}.
    To see that $\tdtwo(G_{\ell,k+1}) \leq k+2$ note that the blocks of $G_{\ell,k+1}$ are of the form $B_s = G_{\ell,k+1}[\{r_{s,k}\} \cup V(G_{s+1,k})]$ for $s \in [\ell-1]$.
    Note that $r_{s,k}$ is an apex in $B_s$, hence, by \cref{lem:apex-td2}, $\tdtwo(B_s) = 1 + \tdtwo(G_{s+1,k})$.
    By induction, we obtain
    \[
        \tdtwo(G_{\ell, k + 1}) = \max_{s \in [\ell-1]} \tdtwo(B_s) \leq 1 + \max_{s \in [\ell-1]} \tdtwo(G_{s + 1, k}) \leq 1+(k+1) = k+2.
    \]
    This shows that~\ref{ind:pw} holds.

    It remains to prove~\ref{ind:free}.
    We remark that showing $P_{3\ell+1}$-freeness of $G_{\ell,k+1}$ is a simple exercise.
    However, for the sake of precision, we show that $G_{\ell,k+1}$ is $P_{\ell+1}$-free, which is best possible, but the proof is slightly more involved.
    Let $Q$ be an induced path in $G_{\ell, k + 1}$. 
    First, suppose that $Q$ does not contain any root vertex $r_{s, k}$. Then $Q$ lies entirely inside $G_{s, k}$ for some $s \in [\ell]$ and so has at most $s \leq \ell$ vertices, by induction. 
    Next, suppose that $Q$ contains exactly one root, $r_{s, k}$ for some $s \in [\ell-1]$. 
    This implies that $Q$ lies inside $G_{s, k} \cup G_{s + 1, k}$. 
    Assume that $Q$ intersects $G_{s+1,k}$ (we have already considered the other case).
    Since $r_{s, k}$ dominates $G_{s + 1, k}$ and $Q$ is induced, any vertex of $G_{s + 1, k}$ in $Q$ is an endpoint of $Q$. 
    This implies that $Q$ contains at most two vertices from $G_{s + 1, k}$. 
    If $Q$ contains one vertex $v$ from $G_{s + 1, k}$, then $Q - v$ is an induced path in $G_{s, k}$, and so, $Q-v$ has at most $s$ vertices by induction.
    It follows that $Q$ has at most $s + 1 \leq \ell$ vertices.
    Finally, if $Q$ contains two vertices $u$ and $v$ from $G_{s + 1, k}$, then $Q = u r_{s, k} v$ is a path on 3 vertices. 
    But $G_{1, k}$ and $G_{2, k}$ (complete graphs on resp.\ $1$ and $k+1$ vertices) do not contain an induced $P_3$, so $s \geq 2$, and in particular, $\ell \geq 3$. 
    In all cases $Q$ has at most $\ell$ vertices and so $G_{\ell,k+1}$ is $P_{\ell+1}$-free as desired.
    Finally, suppose that $Q$ contains at least two root vertices. Let $s^\ast = \min\set{s \in [\ell] : r_{s, k} \in V(Q)}$ and $t^\ast = \max\set{s \in [\ell] : r_{s, k} \in V(Q)}$.
    Note that any path from $r_{s^\ast, k}$ to $r_{t^\ast, k}$ passes through each of $\set{r_{s, k} : s^\ast < s < t^\ast}$, and so, all these vertices are in $Q$. 
    Furthermore, as $Q$ is induced and $P = r_{s^\ast, k} r_{s^\ast + 1, k} \dots r_{t^\ast, k}$ is a path in $G_{\ell, k + 1}$, we must have $Q = Q_s r_{s^\ast, k} P r_{t^\ast, k} Q_t$ where $Q_s$ is a path ending at $r_{s^\ast, k}$ and $Q_t$ is a path starting at $r_{t^\ast, k}$. 
    We first consider $Q_t$. 
    Note that $r_{t^\ast + 1, k} \notin V(Q_t)$, by maximality of $t^\ast$, and $r_{t^\ast - 1, k} \notin V(Q_t)$ since it is already in $P$. 
    Thus $Q_t \subset G_{t^\ast, k} \cup G_{t^\ast + 1, k}$. But $r_{t^\ast - 1, k} \in V(P)$ dominates $G_{t^\ast, k}$ and so $V(Q_t) \subset \set{r_{t^\ast, k}} \cup V(G_{t^\ast + 1, k})$. Finally, $r_{t^\ast, k}$ dominates $G_{t^\ast + 1, k}$ and so $Q_t$ contains at most one vertex of $G_{t^\ast + 1, k}$.
    It follows that $Q_t$ has at most 2 vertices (and if it is 2, then $t^\ast \leq \ell - 1$). 
    Overall, $P r_{t^\ast, k} Q_t$ has at most $\ell - s^\ast+1$ and so it remains to show that $Q_s$ has fewer than $s^\ast+1$ vertices. 
    Note that $r_{s^\ast - 1, k} \notin V(Q_s)$ by minimality of $s^\ast$ and $r_{s^\ast + 1, k} \notin V(Q_s)$ since it is already in $P$. 
    Thus $Q_s \subset G_{s^\ast, k} \cup G_{s^\ast + 1, k}$. 
    Since $r_{s^\ast, k}$ dominates $G_{s^\ast + 1, k}$ and $Q$ is induced, any vertex of $Q_s$ in $G_{s^\ast + 1, k}$ is an endpoint of $Q_s$. 
    Thus, if $Q_s$ contains a vertex $v$ of $G_{s^\ast + 1, k}$, then $Q_s = vr_{s^\ast, k}$. 
    Since $Q$ is induced, $v$ is not adjacent to $r_{s^\ast + 1, k}$.
    It follows that $G_{s^\ast + 1, k}$ is not a complete graph, and so, $s^\ast + 1 \geq 3$. 
    Therefore, in this case, $Q_s$ has fewer than $s^\ast+1$ vertices, as required. 
    Otherwise, $Q_s \subset G_{s^\ast, k}$ and so, by induction, the desired statement follows.
    This shows~\ref{ind:free} and ends the proof.
\end{proof}

\subsection{Upper bound}

We now prove \cref{thm:td2-upper}, which we repeat for convenience.

\tdupper*

\begin{proof}[Proof of \cref{thm:td2-upper}]
    If $n = 1$, then the proof is immediate, so assume $n \geq 2$.
    The proof is by induction on $k$. 
    Cycles have $2$-treedepth equal to 3, hence, any graph with $2$-treedepth at most 2 is a forest. 
    The result for $k = 2$ follows immediately, as any path in a forest is induced. 
    Assume that $k \geq 3$ and let $G$ be a graph with $2$-treedepth at most $k$.
    
    Let $P = v_1 \dots v_n$ be a path in $G$. 
    By restricting to the induced subgraph $G[V(P)]$ we may and will assume without loss of generality that $P$ is a Hamiltonian path in $G$. 
    Recall that $\calA(G)$ is the set of cut-vertices of $G$. 
    Let $A \coloneqq \set{i \in [n] : v_i \in \calA(G)}$ and enumerate $A$ as $a_1 < a_2 < \dots < a_r$. Also define $a_0 \coloneqq 1$ and $a_{r + 1} \coloneqq n$ and note that $a_0 < a_1 < a_2 < \dots < a_r < a_{r + 1}$. 
    Since $P$ is a Hamiltonian path in $G$, the blocks of $G$ are $B_i = G[\set{v_{a_i},\dots,v_{a_{i+1}}}]$ for each $i \in \{0,\dots,r\}$.

    For each $i \in \{0,\dots,r\}$, let $P_i$ be an induced path in $B_i$ between $v_{a_i}$ and $v_{a_{i+1}}$.
    We claim that $Q \coloneqq P_0 v_{a_1} P_1 v_{a_2} \dots v_{a_r} P_r$ is an induced path in $G$. 
    Since the only non-empty intersections between the blocks of $G$ are $V(B_{i - 1}) \cap V(B_i) = \set{v_{a_i}}$, the graph $Q$ is certainly a path. Suppose that there are two adjacent non-consecutive vertices $u$ and $v$ of $Q$. Since each $P_i$ is induced, $u$ and $v$ cannot be in the same $P_i$. However this implies there is no block that contains both $u$ and $v$, which contradicts $uv \in E(G)$.

    The path $Q$ contains $v_{a_0}, \dots, v_{a_{r + 1}}$ and so, if $r \geq n^{1/(k-1)}/2 - 2$, then $Q$ is an induced path with at least $n^{1/(k-1)}/2$ vertices, as required. 
    Otherwise, the number of blocks of $G$ is $r + 1 \leq n^{1/(k-1)}/2 - 1$ and so there is $i \in \{0,\dots,r\}$ such that $B_i$ contains at least $n/(n^{1/(k-1)}/2 - 1) \geq 2 n^{1 - 1/(k-1)} + 1$ vertices.
    By definition of $2$-treedepth, there is a vertex $v \in V(B_i)$ such that $\tdtwo(B_i - v) = \tdtwo(B_i) - 1 \leq \tdtwo(G) - 1 \leq k-1$. Block $B_i$ has the Hamiltonian path $v_{a_i} v_{a_i + 1} \dots v_{a_{i + 1}}$. Either the first or second half of this Hamiltonian path does not contain $v$ and so $B_i - v$ contains a path on at least $(\abs{V(B_i)} - 1)/2 \geq n^{1 - 1/(k-1)}$ vertices. Finally, by induction on $k$, $B_i - v$ (and so $G$) contains an induced path on at least $(n^{1 - 1/(k-1)})^{1/(k - 2)}/2 = n^{1/(k-1)}/2$ vertices, as required.
\end{proof}

\section{Open problems}\label{sec:open}

The first thread of this paper was to determine
\[
    g(k, t) \coloneqq \max\set{\td(G) : \tdtwo(G) \leq k \text{ and } G \text{ is } P_t\text{-free}}.
\]
We like to think of $t$ being a fixed constant and $k$ being a variable.
Then, \cref{thm:main,thm:lower-bound} combine to show that $g(k, t) = \Theta(k^{\lfloor(t-1)\slash 2\rfloor})$.
\Cref{thm:p5free-upper} determines exactly $g(k,4)$ and
$g(k,5)$.
We suspect the following.
\begin{conjecture}\label{conj:poly}
    For all integers $t \geq 2$ and $k \geq 2$, $g(k, t) = \binom{\lfloor (t-1) \slash 2 \rfloor+k-1}{\lfloor (t-1) \slash 2 \rfloor}$.
\end{conjecture}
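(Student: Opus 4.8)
The plan is to close the multiplicative gap of $2$ between the upper bound of \cref{thm:main} and the lower bound of \cref{thm:lower-bound}. Since the construction $G_{r,k}$ of \cref{sec:lower-bound} is already tight, the conjecture is equivalent to improving \cref{lem:td-f} from $\td(G,S)\le f(k,t)$ to $\td(G,S)\le\binom{\lfloor(t-1)/2\rfloor+k-1}{\lfloor(t-1)/2\rfloor}$. Tracing the computation in \cref{lem:computation}, the entire loss is caused by the additive $+1$ in the recursion $f(k,t)=f(k,t-2)+f(k-1,t)+1$: deleting it yields $h(k,t)=h(k,t-2)+h(k-1,t)$ with base cases $h(k,2)=1$, $h(k,3)=k$, $h(1,t)=1$, and this recursion evaluates by Pascal's rule to exactly $\binom{\lfloor(t-1)/2\rfloor+k-1}{\lfloor(t-1)/2\rfloor}$. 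So the whole task is to remove one elimination round from the proof of \cref{lem:td-f}.

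In that proof the $+1$ appears because two reductions are performed in separate rounds: one first passes to the minimal $S$-core $G'$ and eliminates the set $X\supseteq\calA(G')$ supplied by \cref{lem:apply-elimination} (contributing the $f(k,t-2)$ term), and only afterwards deletes, in a fresh round, one vertex $v_B$ per block $B$ of $G'$ with $\tdtwo(B-v_B)=\tdtwo(B)-1$ in order to drop the $2$-treedepth (contributing the $+1$, and then the $f(k-1,t)$ term with parameter $k-1$). The natural strategy is to perform these simultaneously: arrange that the elimination ordering realising $\td(G',\calA(G'))$ already contains a suitable $v_B$ for every block $B$, so that no extra round is needed. This is exactly the mechanism behind the exact bound for $P_5$-free graphs in \cref{thm:p5free-upper}: \cref{lem:p4-p5} guarantees that every non-central block has its cut-vertex as an apex, so one may take $v_B$ to be that cut-vertex — which is eliminated anyway — and the whole set $\calA(G)$ together with one extra vertex of the central block is then removed in a single shot with no spurious $+1$. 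For general $t$ I would try to strengthen \cref{lem:apply-elimination} so that a bounded, suitably spread-out set of prescribed vertices can be added to $X$ \emph{for free}, guided by an induction on $\diam(\btree(G))$ (bounded by $2(t-3)$ via \cref{lem:diam}) that peels the leaf blocks of the minimal $S$-core one level at a time, and thereby establish the clean inequality $\td(G,S)\le h(k,t-2)+h(k-1,t)=\binom{\lfloor(t-1)/2\rfloor+k-1}{\lfloor(t-1)/2\rfloor}$.

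The main obstacle is precisely the step of making the $2$-treedepth reduction free of charge: the vertex $v_B$ witnessing $\tdtwo(B-v_B)=\tdtwo(B)-1$ need not be a cut-vertex of $G'$, and when $B$ is a large $2$-connected $P_t$-free graph this vertex can lie deep inside $B$, so it cannot be folded into the cut-vertex elimination the way it can for $t\le5$. Getting past this seems to require either a structural ``almost-apex'' statement for $P_t$-free blocks strengthening \cref{lem:diam} and \cref{lem:p4-p5}, or a potential-function argument that amortises each wasted $+1$ against a compensating drop in $t$ or in $\diam(\btree(\cdot))$. One cannot rule out a priori that no such amortisation exists and the conjecture fails for large $t$; nevertheless the exactness of the bound for $t\le5$, together with the fact that \cref{thm:main} misses by only an additive constant and not a multiplicative factor, makes the conjecture plausible, and the natural first test would be to resolve $t\in\{6,7\}$ directly by a case analysis on $\diam(\btree(G))\in\{0,2,4,6\}$ extending the $P_5$-free argument.
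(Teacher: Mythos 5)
This statement is a \emph{conjecture} in the paper: the authors leave it open and offer no proof, only the remark (immediately after the conjecture) that it would follow if \cref{lem:td-f} held with the \textup{`$+1$'} deleted from the recursion $f(k,t)=f(k,t-2)+f(k-1,t)+1$. Your proposal is essentially an expanded version of that same remark rather than a proof. You correctly locate where the factor of $2$ comes from, you correctly observe that the modified recursion telescopes via Pascal's rule to $\binom{\lfloor(t-1)/2\rfloor+k-1}{\lfloor(t-1)/2\rfloor}$, and you correctly identify the $P_5$-free argument of \cref{thm:p5free-upper} as the model for merging the two elimination rounds. But the decisive step --- arranging, for general $t$, that the vertices $v_B$ with $\tdtwo(B-v_B)=\tdtwo(B)-1$ can be absorbed into the elimination realising $\td(G',\calA(G'))$ at no extra cost --- is exactly the step you leave unproven, and you say so yourself (``The main obstacle is precisely the step of making the $2$-treedepth reduction free of charge''). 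No strengthened form of \cref{lem:apply-elimination} is stated, let alone proved; the proposed ``almost-apex'' structure for $P_t$-free blocks is only hypothesised; and the amortisation via $\diam(\btree(\cdot))$ is a direction, not an argument. You even concede the conjecture might be false for large $t$.

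So there is a genuine gap, and it is the entire content of the conjecture: everything you write before the obstacle is already in the paper (the reduction to removing the $+1$, the binomial identity in \cref{lem:computation}, the tightness of the construction in \cref{sec:lower-bound}), and everything after the obstacle is speculation. This is a reasonable research plan --- in particular, attacking $t\in\{6,7\}$ by case analysis on $\diam(\btree(G))$ is a sensible first step --- but it is not a proof, and it should not be presented as one.
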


This would follow if \cref{lem:td-f} held with the recursive definition of $f$ (which appears just before \cref{lem:computation}) altered to remove the `$+1$' from $f(k, t) = f(k, t - 2) + f(k - 1, t) + 1$.

Next, one could similarly define
\[
h(k, t) \coloneqq \max\set{\td(G) : \tw(G) \leq k \text{ and } G \text{ is } P_t\text{-free}}.
\]
From the results mentioned in the paper, we know that for a fixed $t$, the function $h(k,t)$ is in $\Omega(k^{\lfloor (t-1) \slash 2 \rfloor})$ and $\Oh(k^{t-1})$.
\begin{problem}
For a fixed integer $t \geq 2$, determine the growth rate of $h(k, t)$.
\end{problem}
A similar problem can be stated for parameters $\td_3$, $\td_4$, etc.\ as defined by Rambaud~\cite{k-td}.

The second thread of this paper was to understand $f(\calC, n)$, the largest integer such that, for every $G \in \calC$, if $G$ contains $P_n$ as a subgraph, then it contains $P_{f(\calC, n)}$ as an induced subgraph. \Cref{thm:td2-lower,thm:td2-upper,thm:pw} determine $f(\calC, n)$ when $\calC$ is the class of graphs with pathwidth at most $k$ or the class of graphs with 2-treedepth at most $k$. A very interesting open case is when $\calC = \calW_k$ the class of graphs with treewidth at most $k$. Here the current bounds for general $k$ are
\[
\tfrac{1}{4} (\log n)^{1/k} \leq f(\calW_k, n) \leq 2 \floor{k/2} (\log n)^{\frac{1}{\floor{k/2}}},
\]
where the lower bound\footnote{The statement of \cite[Theorem~1.6]{Hilaire2023} is $f(\calW_{k - 1}, n) \geq 1/4 \cdot (\log n)^{1/k}$. However, using the improved bound for $f(\calP_k, n)$ (see footnote~\ref{foot:improved}) in their argument gives the bound written here.} is due to Hilaire and Raymond~\cite{Hilaire2023} and the upper bound is due to Esperet, Lemoine, and Maffray~\cite{ELM17}. For $k = 2$ it is known that $f(\calW_2, n) = \Theta(\log n)$~\cite{AV00,ELM17}. We believe that even the case $k = 3$ is open.
\begin{problem}
    Determine the growth rate of $f(\calW_k, n)$ for each fixed integer $k \geq 3$.
\end{problem}

\section*{Acknowledgements}

We thank Clément Rambaud for pointing out that the weak variant of \cref{thm:main} follows from the known results on weak colouring numbers.

\bibliographystyle{abbrv}
\bibliography{bibliography}
\end{document}